\newtheorem{obs} [subsection]{Remark}
\newtheorem{prop}[subsection]{Proposition}
\newtheorem{teor}[subsection]{Theorem}
\newtheorem{lema}[subsection]{Lemma}
\newtheorem{cor} [subsection]{Corollary}
\newcommand{\Zng}{$\mathbb Z^n$-graded $S$-module}
\newcommand{\Zngp}{$\mathbb Z^{n}$-graded $S'$-modules}
\def\sdepth{\operatorname{sdepth}}
\def\depth{\operatorname{depth}}
\def\supp{\operatorname{supp}}
\begin{document}
\selectlanguage{english}
\frenchspacing

\large
\begin{center}
\textbf{Stanley depth of quotient of monomial complete intersection ideals}

Mircea Cimpoea\c s
\end{center}
\normalsize

\begin{abstract}
We compute the Stanley depth for a particular, but important case, of the quotient of complete intersection monomial ideals. Also, in the general case, we give sharp bounds for the Stanley depth of a quotient of complete intersection monomial ideals. In particular, we prove the Stanley conjecture for quotients of complete intersection monomial ideals.

\noindent \textbf{Keywords:} Stanley depth, Stanley conjecture, monomial ideal.

\noindent \textbf{2000 Mathematics Subject
Classification:}Primary: 13P10.
\end{abstract}

\section*{Introduction}

Let $K$ be a field and $S=K[x_1,\ldots,x_n]$ the polynomial ring over $K$.
Let $M$ be a \Zng. A \emph{Stanley decomposition} of $M$ is a direct sum $\mathcal D: M = \bigoplus_{i=1}^rm_i K[Z_i]$ as a $\mathbb Z^n$-graded $K$-vector space, where $m_i\in M$ is homogeneous with respect to $\mathbb Z^n$-grading, $Z_i\subset\{x_1,\ldots,x_n\}$ such that $m_i K[Z_i] = \{um_i:\; u\in K[Z_i] \}\subset M$ is a free $K[Z_i]$-submodule of $M$. We define $\sdepth(\mathcal D)=\min_{i=1,\ldots,r} |Z_i|$ and $\sdepth_S(M)=\max\{\sdepth(\mathcal D)|\;\mathcal D$ is a Stanley decomposition of $M\}$. The number $\sdepth_S(M)$ is called the \emph{Stanley depth} of $M$. Stanley \cite{stan} conjectured that $\sdepth_S(M)\geq\depth_S(M)$ for any \Zng $\;M$. Herzog, Vladoiu and Zheng show in \cite{hvz} that $\sdepth_S(M)$ can be computed in a finite number of steps if $M=I/J$, where $J\subset I\subset S$ are monomial ideals. However, it is difficult to compute this invariant, even in some very particular cases.

In section $1$, we consider the case of quotients of irreducible monomial ideals, and we compute their Stanley depth, see Theorem $1.5$. In section $2$, we consider the general case of two complete intersection monomial ideals $I\subset J \subset S$. In Theorem $2.4$ we give sharp bounds for $\sdepth_S(J/I)$. Remark $2.10$ shows that these bounds are best possible. However, in a particular case, we give an explicit formula for $\sdepth_S(J/I)$, see Theorem $2.9$.

\footnotetext[1]{The support from grant ID-PCE-2011-1023 of Romanian Ministry of Education, Research and
Innovation is gratefully acknowledged.}

\section{The case of irreducible ideals}

First, we give the following technical Lemma.

\begin{lema}
Let $b$ be a positive integer, denote $S'=K[x_2,\ldots,x_n]$ and let $I\subsetneq J\subset S'$ be two monomial ideals.
Then $(x_1^b,J)/(x_1^b,I) \cong \bigoplus_{i=0}^{b-1} x_1^i(J/I)$, as \Zngp.
Moreover, $\sdepth_S((x_1^b,J)/(x_1^b,I)) = \sdepth_{S'}(J/I)$.
\end{lema}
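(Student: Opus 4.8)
The plan is to prove both assertions by passing to the explicit monomial $K$-bases and then transferring Stanley decompositions in each direction. Writing every monomial of $S=S'[x_1]$ uniquely as $x_1^a u$ with $u$ a monomial of $S'$ and $a\ge 0$, membership in a monomial ideal is detected monomial by monomial, so $x_1^a u\in (x_1^b,J)$ exactly when $a\ge b$ or $u\in J$, and similarly for $I$. Hence the monomial $K$-basis of $(x_1^b,J)/(x_1^b,I)$ consists precisely of the $x_1^a u$ with $0\le a\le b-1$ and $u\in J\setminus I$. Since multiplication by an element of $S'$ leaves the exponent $a$ unchanged, the map sending $x_1^i u$ to its copy in the $i$-th summand is a $\mathbb Z^n$-graded $S'$-linear bijection onto $\bigoplus_{i=0}^{b-1}x_1^i(J/I)$, which establishes the isomorphism.

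For the inequality $\sdepth_S\big((x_1^b,J)/(x_1^b,I)\big)\ge \sdepth_{S'}(J/I)$ I would simply lift a decomposition. Starting from a Stanley decomposition $J/I=\bigoplus_j v_j K[Z_j]$ over $S'$ realizing $\sdepth_{S'}(J/I)$, where each $Z_j\subset\{x_2,\ldots,x_n\}$, multiplying through by $x_1^i$ gives $x_1^i(J/I)=\bigoplus_j x_1^i v_j K[Z_j]$. Summing over $0\le i\le b-1$ produces a Stanley decomposition of the quotient over $S$ that reuses the very same sets $Z_j$, so its value $\min_j|Z_j|$ is unchanged, yielding the desired lower bound.

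The reverse inequality carries the real content, and the crux—the step I expect to be the main obstacle—is the observation that $x_1$ can belong to no $Z_k$ in any Stanley decomposition of the quotient over $S$. Indeed, if a Stanley space $m_k K[Z_k]$ had $x_1\in Z_k$, it would contain $m_k x_1^t$ for all $t\ge 0$ and thus monomials of arbitrarily large $x_1$-exponent; but every monomial of the quotient has $x_1$-exponent at most $b-1$, a contradiction. Consequently, in any decomposition $\bigoplus_k m_k K[Z_k]$ we have $Z_k\subset\{x_2,\ldots,x_n\}$, so each Stanley space lies entirely within a single slice $x_1^{a_k}(J/I)$. Collecting the spaces with $a_k=0$ (a nonempty family, since $I\subsetneq J$ forces monomials of $x_1$-exponent $0$) gives a Stanley decomposition $J/I=\bigoplus_{a_k=0} m_k K[Z_k]$ over $S'$ whose Stanley depth is at least that of the original decomposition over $S$. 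This proves $\sdepth_{S'}(J/I)\ge \sdepth_S\big((x_1^b,J)/(x_1^b,I)\big)$, and combining the two inequalities gives the equality.
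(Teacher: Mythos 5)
Your proof is correct and takes essentially the same route as the paper: the monomial-by-monomial characterization $x_1^a u$ with $0\le a\le b-1$, $u\in J\setminus I$ gives the isomorphism and the lower bound, and your upper bound via collecting the Stanley spaces in the $x_1$-degree-zero slice is exactly the paper's argument of intersecting a Stanley decomposition with $S'/I$ inside $S/(x_1^b,I)$. The one point where you are more careful than the paper is the explicit verification that $x_1\notin Z_k$ for every Stanley space (from the bounded $x_1$-exponent), which the paper's claim that $v_iK[Z_i]\cap S'=v_iK[Z_i]$ whenever $x_1\nmid v_i$ silently assumes.
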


\begin{proof}
Let $u\in (x_1^b,J)\setminus (x_1^b,I)$ be a monomial. Then $u=x_1^i\cdot u'$, for some nonnegative integer $i$ and some monomial $u'\in S'$. Since $u\notin (x_1^b,I)$, it follows that $i<b$ and, also, $u'\notin I$. On the other hand, since $u\in (x_1^b,J)$ and $u\notin x_1^bS$, it follows that $u'\in J$. Therefore, $u\in x_1^i(J\setminus I)$. Conversely, if we take a monomial $u'\in J\setminus I$ and an integer $0\leq i <b$, one can easily see that $u:=x_1^i\cdot u'\in (x_1^b,J)\setminus (x_1^b,I)$.

The decomposition $(x_1^b,J)/(x_1^b,I) \cong \bigoplus_{i=0}^{b-1} x_1^i(J/I)$ implies $\sdepth_S((x_1^b,J)/(x_1^b,I)) \geq \sdepth_{S'}(J/I)$ and, also, $J/I = ((x_1^b,J)/(x_1^b,I))\cap (S'/I)$, via the natural injection $S'/I \hookrightarrow S/(x_1^b,J)$. In order to prove the other inequality, we consider a Stanley decomposition of $(x_1^b,J)/(x_1^b,I) = \bigoplus_{i=1}^r v_i K[Z_i]$. Note that $v_i K[Z_i] \cap S' = \{0\}$ if $x_1|u$ and, otherwise, $v_i K[Z_i] \cap S' = v_i K[Z_i]$. Therefore, $J/I=\bigoplus_{x_1\nmid v_i}v_i K[Z_i]$ and thus $\sdepth_{S'}(J/I)\geq \sdepth_S((x_1^b,J)/(x_1^b,I))$ as required.
\end{proof}

An easy corollary of Lemma $1.1$ is the following.

\begin{cor}
Let $0\leq m < n$ be an integer. Then, $$\sdepth_S((x_1,\ldots,x_n)/(x_1,\ldots,x_m))= n - m - \left\lfloor \frac{n-m}{2} \right\rfloor.$$
\end{cor}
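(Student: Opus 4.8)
The plan is to reduce the computation to a known Stanley depth calculation by applying Lemma 1.1 repeatedly. The ideals in question are $J=(x_1,\ldots,x_n)$ and $I=(x_1,\ldots,x_m)$, and I want to strip off the variables $x_1,\ldots,x_m$ one at a time. Observe that each generator $x_j$ is precisely $x_j^1$, so Lemma 1.1 applies with exponent $b=1$. Writing $S'=K[x_2,\ldots,x_n]$, the lemma with $b=1$ gives
\begin{equation*}
\sdepth_S((x_1,x_2,\ldots,x_n)/(x_1,x_2,\ldots,x_m)) = \sdepth_{S'}((x_2,\ldots,x_n)/(x_2,\ldots,x_m)).
\end{equation*}
Iterating this $m$ times, peeling off $x_1,\ldots,x_m$ successively, I reduce the problem to computing $\sdepth_{S''}(\me)$, where $S''=K[x_{m+1},\ldots,x_n]$ is a polynomial ring in $n-m$ variables and $\me=(x_{m+1},\ldots,x_n)$ is its maximal graded ideal.

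The second step is to invoke the known value of the Stanley depth of the maximal ideal of a polynomial ring. For the polynomial ring in $N$ variables, the Stanley depth of its maximal ideal is $N-\lfloor N/2\rfloor=\lceil N/2\rceil$; this is a result of Biró, Howard, Keller, Trotter and Young. Applying this with $N=n-m$ yields
\begin{equation*}
\sdepth_{S''}(\me) = (n-m) - \left\lfloor \frac{n-m}{2} \right\rfloor,
\end{equation*}
which is exactly the claimed formula.

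The main technical point to verify carefully is that Lemma 1.1 is genuinely applicable at each stage of the iteration, that is, that at every step the smaller ideal is strictly contained in the larger one so that the hypothesis $I\subsetneq J$ holds, and that both remain monomial ideals inside the shrinking polynomial ring. Since at the $k$-th stage we have the pair $(x_{k+1},\ldots,x_m)\subsetneq(x_{k+1},\ldots,x_n)$ inside $K[x_{k+1},\ldots,x_n]$, strict inclusion is guaranteed precisely because $m<n$; this is where the hypothesis $0\le m<n$ is used, ensuring the quotient is never zero and the reduction terminates at the nonzero maximal ideal rather than at the zero module. I do not anticipate a serious obstacle: the only care needed is bookkeeping the indices across the $m$ successive applications of the lemma and citing the correct external value for $\sdepth$ of the maximal ideal.
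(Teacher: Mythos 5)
Your proposal is correct and is essentially the paper's own argument: the paper proves the corollary by induction on $m$, where each inductive step is exactly one application of Lemma 1.1 with $b=1$ (peeling off one variable), and the base case $m=0$ is the Bir\'o--Howard--Keller--Trotter--Young value $\sdepth_S(\me)=\left\lceil n/2\right\rceil$ that you invoke. Your unrolled iteration, including the check that strict inclusion persists because $m<n$, is just the same induction written out explicitly (and the paper's remark after the corollary even notes your endpoint observation, that the quotient is isomorphic to $(x_{m+1},\ldots,x_n)$ over $K[x_{m+1},\ldots,x_n]$).
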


\begin{proof}
We use induction on $m$. If $m=0$, then, by \cite[Theorem 1.1]{par}, $\sdepth_S((x_1,\ldots,x_n))=\left\lceil n/2 \right\rceil = n - \left\lfloor \frac{n}{2} \right\rfloor$, as required. The case $m=n$ is trivial.

Now, assume $1\leq m < n$. Let $S'=K[x_2,\ldots,x_n]$ and denote $J=(x_2,\ldots,x_n)\subset S'$ and $I=(x_2,\ldots,x_m) \subset S'$. According to Lemma $1.1$, $(x_1,J)/(x_1,I)\cong J/I$ and thus, by induction hypothesis, 
$\sdepth_S((x_1,\ldots,x_n)/(x_1,\ldots,x_m)) = \sdepth_{S'}(J/I) = (n - 1) - (m-1) - \left\lfloor \frac{(n-1)-(m-1)}{2} \right\rfloor = n - m - \left\lfloor \frac{n-m}{2} \right\rfloor$, which complete the proof.
\end{proof}

If we denote $S''=K[x_{m+1},\ldots,x_n]$, note that the above Corollary follows also from the isomorphism of multigraded $S''$-modules, $(x_1,\ldots,x_n)/(x_1,\ldots,x_m) \cong (x_{m+1},\ldots,x_n)$.

\begin{lema}
Let $1\leq a<b$ be two integers, denote $S'=K[x_2,\ldots,x_n]$ and let $J\subset S'$ be a monomial ideals.
Then $(x_1^a,J)/(x_1^b,J) \cong \bigoplus_{i=a}^{b-1} x_1^i(S'/J)$, as \Zngp.
Moreover, $\sdepth_S((x_1^a,J)/(x_1^b,J)) = \sdepth_{S'}(S'/J)$.
\end{lema}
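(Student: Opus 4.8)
The plan is to mirror the structure of Lemma 1.1, which handled the quotient $(x_1^b,J)/(x_1^b,I)$; here the roles are reversed, since the two ideals share the same $S'$-part $J$ but differ in the power of $x_1$. First I would identify the monomials of $(x_1^a,J)\setminus(x_1^b,J)$. A monomial $u$ lies in $(x_1^a,J)$ iff $u\in x_1^aS$ or $u\in JS$; writing $u=x_1^i u'$ with $u'\in S'$, membership in $(x_1^a,J)$ forces either $i\geq a$ or $u'\in J$, while \emph{non}-membership in $(x_1^b,J)$ forces both $i<b$ and $u'\notin J$. Combining these, I expect $u'\notin J$ together with $a\leq i<b$. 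This gives exactly the monomials of $\bigoplus_{i=a}^{b-1}x_1^i(S'/J)$, establishing the claimed direct-sum decomposition as $\mathbb{Z}^n$-graded $S'$-modules.

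For the Stanley-depth equality, the $\geq$ direction is immediate: any Stanley decomposition of $S'/J$ over $S'$ can be shifted by each power $x_1^i$ (with $a\leq i<b$) and summed, yielding a Stanley decomposition of the left-hand module with the same minimal $|Z_i|$, so $\sdepth_S((x_1^a,J)/(x_1^b,J))\geq\sdepth_{S'}(S'/J)$. The nontrivial direction is to bound the Stanley depth from above, and here I would reproduce the intersection argument of Lemma 1.1. The key observation is that $S'/J$ sits inside $(x_1^a,J)/(x_1^b,J)$ as the $x_1$-degree-$a$ component (or, after the natural identification, via the component where $x_1\nmid v_i$), so that intersecting any Stanley decomposition $\bigoplus_{i=1}^r v_iK[Z_i]$ of the left side with the appropriate slice picks out a Stanley decomposition of $S'/J$.

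The main obstacle, and the one subtlety distinguishing this from Lemma 1.1, is that here $a\geq1$, so the copy of $S'/J$ we wish to isolate is the component in $x_1$-degree exactly $a$ rather than $x_1$-degree $0$. A piece $v_iK[Z_i]$ contributes to this slice only when $x_1\notin Z_i$ and the $x_1$-exponent of $v_i$ equals $a$; if $x_1\in Z_i$, then $v_iK[Z_i]$ spreads across infinitely many $x_1$-degrees and meets the degree-$a$ slice in a $K[Z_i\setminus\{x_1\}]$-module, which would lower the Stanley depth in that piece. I would therefore argue that restricting to $x_1$-degree $a$ and forgetting $x_1$ produces a Stanley decomposition of $S'/J$ over $S'$, where each surviving $Z_i$ has $|Z_i\setminus\{x_1\}|\geq\sdepth_S((x_1^a,J)/(x_1^b,J))-1$ in the worst case; the cleanest route, matching the lemma's own phrasing, is instead to use the degree-$a$ copy directly so that the pieces with $x_1\nmid v_i$ restrict isometrically, giving $\sdepth_{S'}(S'/J)\geq\sdepth_S((x_1^a,J)/(x_1^b,J))$ and hence equality.
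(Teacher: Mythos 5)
Your characterization of the monomials of $(x_1^a,J)\setminus(x_1^b,J)$ and your proof of $\sdepth_S((x_1^a,J)/(x_1^b,J))\geq\sdepth_{S'}(S'/J)$ are correct and match the paper's argument, which likewise deduces the $\geq$ inequality from the isomorphism and proves the $\leq$ inequality by intersecting a Stanley decomposition of the left-hand module with the slice $x_1^a(S'/J)$. So your route is the same as the paper's; the problem is that you name the one genuine difficulty and then do not actually close it.

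Concretely: you correctly observe that a piece $v_iK[Z_i]$ with $x_1\in Z_i$ would meet the $x_1$-degree-$a$ slice only in $v_iK[Z_i\setminus\{x_1\}]$, costing one dimension, and you then assert that "using the degree-$a$ copy directly" makes the surviving pieces restrict isometrically. That assertion is exactly the point requiring proof, and as written your argument only yields $\sdepth_{S'}(S'/J)\geq\sdepth_S((x_1^a,J)/(x_1^b,J))-1$. The missing observation is that the case $x_1\in Z_i$ cannot occur at all: every monomial of $(x_1^a,J)\setminus(x_1^b,J)$ has $x_1$-exponent in the bounded range $[a,b)$, because both $x_1^bS$ and $JS$ lie in $(x_1^b,J)$; so if $x_1\in Z_i$, the monomials $v_ix_1^N\in v_iK[Z_i]$ would have unbounded $x_1$-exponent, a contradiction. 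Hence $x_1\notin Z_i$ for every piece, each $v_iK[Z_i]$ is concentrated in the single $x_1$-degree $\deg_{x_1}(v_i)$, and intersecting with $x_1^a(S'/J)$ retains, unchanged, exactly the pieces with $\deg_{x_1}(v_i)=a$ (which exist, since the slice is nonzero). Note also that your final selection criterion "$x_1\nmid v_i$" is vacuous here: since $a\geq 1$, every $v_i$ satisfies $x_1^a\mid v_i$; the correct criterion is $\deg_{x_1}(v_i)=a$, as you yourself stated earlier in the paragraph. With this one observation added, your proof is complete and coincides with the paper's (whose own Lemma 1.1 tacitly relies on the same boundedness fact).
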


\begin{proof}
The proof is similar to the proof of Lemma $1.1$. In order to prove the $\mathbb Z^n$-graded isomorphism $(x_1^a,J)/(x_1^b,J) \cong \bigoplus_{i=a}^{b-1} x_1^i(S'/J)$, it is enough to see that a monomial $u\in (x_1^a,J)\setminus (x_1^b,J)$ if and only if $u=x_1^i\cdot u'$, for some integer $a\leq i <b$ and some monomial $u'\in S\setminus J$. The isomorphism implies $\sdepth_S((x_1^a,J)/(x_1^b,J)) \geq \sdepth_{S'}(S'/J)$ and in order to prove the other inequality, as in Lemma $1.1$, it is enough to note that $x_1^a(S'/J) = ((x_1^a,J)/(x_1^b,J)) \cap x_1^a (S'/J)$.
\end{proof}

\begin{lema}
Let $1\leq a < b$ be two integers and denote $S':=K[x_2,\ldots,x_n]$. Let $I\subset J\subset S'$ be two monomial ideals. Then:


$\sdepth_S((x_1^a,J)/(x_1^b,I))\geq \min\{\sdepth_{S'}(J/I),  \sdepth_{S'}(S'/I) \}$.

\end{lema}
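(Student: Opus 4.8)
The key idea here is to decompose the quotient $(x_1^a,J)/(x_1^b,I)$ by separating the "low" powers of $x_1$ (from $0$ to $a-1$) from the "high" powers (from $a$ to $b-1$), since a monomial $u \in (x_1^a,J) \setminus (x_1^b,I)$ can be written as $x_1^i u'$ with $u' \in S'$. When $0 \le i < a$, membership in $(x_1^a,J)$ forces $u' \in J$, and avoiding $(x_1^b,I)$ forces $u' \notin I$; when $a \le i < b$, membership in $(x_1^a,J)$ is automatic, so we only need $u' \notin I$. This suggests the splitting as $\mathbb Z^n$-graded $K$-vector spaces
\begin{equation*}
(x_1^a,J)/(x_1^b,I) \cong \left( \bigoplus_{i=0}^{a-1} x_1^i (J/I) \right) \oplus \left( \bigoplus_{i=a}^{b-1} x_1^i (S'/I) \right),
\end{equation*}
which mirrors exactly the decompositions already obtained in Lemmas $1.1$ and $1.3$.

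The plan is to first establish this direct-sum decomposition rigorously by the same monomial-counting argument used in the earlier lemmas, checking that the two ranges of $i$ are disjoint and cover all monomials of the quotient. Once the decomposition is in hand, I would take an arbitrary Stanley decomposition of $J/I$ over $S'$ achieving $\sdepth_{S'}(J/I)$ and, separately, one of $S'/I$ over $S'$ achieving $\sdepth_{S'}(S'/I)$. Each summand $x_1^i(J/I)$ and $x_1^i(S'/I)$ inherits such a decomposition by simply multiplying the Stanley generators by $x_1^i$ while keeping the sets $Z_i \subset \{x_2,\ldots,x_n\}$ unchanged. Concatenating all these pieces over both ranges produces a Stanley decomposition of the whole module over $S$, whose $\sdepth$ equals the minimum of the two contributing values, giving the desired lower bound.

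The one technical point worth care is that the generators coming from the two ranges use only variables from $S' = K[x_2,\ldots,x_n]$, so $x_1$ never appears in any $Z_i$; this is why we get a lower bound of the form $\min\{\sdepth_{S'}(J/I), \sdepth_{S'}(S'/I)\}$ rather than something larger involving $x_1$. I do not expect a genuine obstacle here, since this is a direct adaptation of the proofs of Lemmas $1.1$ and $1.3$; the mild subtlety is only that the statement gives an inequality rather than an equality, precisely because bundling together the two types of summands into a single global Stanley decomposition can lose information (one cannot, in general, recover a reverse inequality by intersecting with a copy of $S'/I$ as was done before, since the two ranges contribute generators of differing natures). Thus the proof naturally yields only the stated lower bound.
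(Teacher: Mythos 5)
Your proposal is correct and follows essentially the same route as the paper: the paper proves the lemma by writing $(x_1^a,J)/(x_1^b,I) \cong (x_1^a,J)/(x_1^a,I) \oplus (x_1^a,I)/(x_1^b,I)$ and invoking Lemmas $1.1$ and $1.3$, whose slicewise decompositions combine into exactly your direct sum $\bigoplus_{i=0}^{a-1} x_1^i(J/I) \oplus \bigoplus_{i=a}^{b-1} x_1^i(S'/I)$ (indeed, the paper itself cites this form as ``given by Lemma $1.4$'' in the proof of Theorem $1.5$). Your version merely inlines the monomial-counting and Stanley-decomposition arguments that the paper delegates to the two earlier lemmas.
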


\begin{proof}
Note that $(x_1^a,J)/(x_1^b,I) \cong (x_1^a,J)/(x_1^a,I) \oplus (x_1^a,I)/(x_1^b,I)$ as \Zngp. Using Lemma $1.1$ and Lemma $1.3$ we get the required result.
\end{proof}

\begin{teor}
Let $0\leq m \leq n$ be two integers. Let $a_i\geq 1$, for $1\leq i\leq n$ and $b_i\geq a_i$, for $1\leq i\leq m$, be some integers. Then $\sdepth((x_1^{a_1},\ldots,x_n^{a_n})/(x_1^{b_1},\ldots,x_m^{b_m}))=n-m-\left\lfloor \frac{n-m}{2} \right\rfloor$ and thus, $\sdepth_S((x_1^{a_1},\ldots,x_n^{a_n})/(x_1^{b_1},\ldots,x_m^{b_m}))\geq \depth_S((x_1^{a_1},\ldots,x_n^{a_n})/(x_1^{b_1},\ldots,x_m^{b_m}))$.
\end{teor}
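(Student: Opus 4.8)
The plan is to prove the sharp equality $\sdepth_S(J/I)=\lceil (n-m)/2\rceil = n-m-\lfloor\frac{n-m}{2}\rfloor$, where $J=(x_1^{a_1},\ldots,x_n^{a_n})$ and $I=(x_1^{b_1},\ldots,x_m^{b_m})$, by first eliminating the denominator variables through a slicing argument and then reducing to a complete intersection ideal in the remaining variables. The starting observation is that $x_i^{b_i}\in I$ for $1\le i\le m$, so every monomial of $J/I$ has $x_i$-degree strictly less than $b_i$ whenever $i\le m$. Hence in any Stanley decomposition $J/I=\bigoplus_j v_jK[Z_j]$ no set $Z_j$ may contain any of $x_1,\ldots,x_m$ (otherwise that Stanley space would contain monomials of arbitrarily high $x_i$-degree), i.e. $Z_j\subseteq\{x_{m+1},\ldots,x_n\}$. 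Each Stanley space is therefore homogeneous of a fixed degree in $x_1,\ldots,x_m$, so any Stanley decomposition splits along the multidegree \emph{slices} in these variables. Writing $T=K[x_{m+1},\ldots,x_n]$, this yields the exact formula $\sdepth_S(J/I)=\min_\sigma \sdepth_T(\sigma)$, the minimum being taken over the nonempty slices $\sigma$, each viewed as a monomial $T$-module after dividing out its fixed $x_1,\ldots,x_m$-part.

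The second step is to identify the slices. Fixing exponents $c_1<b_1,\ldots,c_m<b_m$, the corresponding slice consists of the $w\in T$ with $x_1^{c_1}\cdots x_m^{c_m}w\in J\setminus I$. If $c_i\ge a_i$ for some $i\le m$, then $x_1^{c_1}\cdots x_m^{c_m}w\in J$ for every $w$, so the slice is all of $T$, with $\sdepth_T(T)=n-m$; otherwise the slice is exactly the ideal $J_T:=(x_{m+1}^{a_{m+1}},\ldots,x_n^{a_n})\subseteq T$. Since the all-zero slice is always of the second type and is nonempty when $n>m$, the minimum equals $\sdepth_T(J_T)$, and the whole theorem reduces to the single statement that a complete intersection ideal generated by $n-m$ pure powers in $n-m$ variables has Stanley depth $\lceil (n-m)/2\rceil$.

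This last statement is the $m=0$ case of the theorem, and it is where the genuine difficulty lies, because the slicing collapses once no variable is bounded. For squarefree generators it is precisely \cite[Theorem 1.1]{par}. To reach arbitrary exponents I would argue that the exponents do not affect the Stanley depth of the ideal: in the Herzog--Vladoiu--Zheng partition model the obstruction is controlled only by the overlap pattern of the minimal generators $x_i^{a_i}$ (for example the quadrants based at two generators always meet in the translate based at their join, exactly as in the squarefree case), and this pattern is combinatorially independent of the $a_i$. Concretely I expect to reduce $(x_1^{a_1},\ldots,x_N^{a_N})$ to $(x_1,\ldots,x_N)$ one exponent at a time, using Lemma $1.1$ and Lemma $1.3$ to describe the difference module. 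I would also note that, once this base case is available, the lower bound in the theorem can be obtained independently by induction on $n$ from Lemma $1.4$: stripping $x_1$ gives $\sdepth_S(J/I)\ge\min\{\sdepth_{S'}(J'/I'),\sdepth_{S'}(S'/I')\}$, where by induction $\sdepth_{S'}(J'/I')=n-m-\lfloor\frac{n-m}{2}\rfloor$ and $\sdepth_{S'}(S'/I')=n-m$ because $S'/I'$ is an Artinian module tensored with a polynomial ring.

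The main obstacle is thus the exponent reduction in the complete intersection base case; the slicing decomposition and the bookkeeping with the already-proved Lemmas are routine by comparison. Having the value $\sdepth_S(J/I)=\lceil(n-m)/2\rceil$ in hand, the Stanley conjecture assertion follows by computing $\depth_S(J/I)$ from the short exact sequence defining $J/I$ and checking that it does not exceed $\lceil(n-m)/2\rceil$, which is immediate since that depth is small (equal to $1$ in the complete intersection case $m=0$ with $n>m$, and $0$ in the Artinian case $n=m$).
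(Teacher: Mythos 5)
Your slicing reduction is correct, and it is a genuinely different route from the paper's. You observe that since $x_i^{b_i}\in I$ for $i\le m$, no Stanley space of $J/I$ can be free in any of $x_1,\ldots,x_m$, so each space sits inside a single $(x_1,\ldots,x_m)$-multidegree slice; there are finitely many nonempty slices (all exponents $c_i<b_i$), and each is, as a module over $T=K[x_{m+1},\ldots,x_n]$, either $T$ itself (when some $c_i\geq a_i$) or $J_T=(x_{m+1}^{a_{m+1}},\ldots,x_n^{a_n})$. This yields the exact identity $\sdepth_S(J/I)=\min\{n-m,\ \sdepth_T(J_T)\}=\sdepth_T(J_T)$ in one stroke, with the upper and lower bounds coming out simultaneously. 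The paper instead inducts on $m$: the lower bound comes from Lemma 1.4 (splitting off $x_1$ via Lemmas 1.1 and 1.3 and the exact-sequence bound of \cite[Lemma 2.1]{okazaki}), and the upper bound from intersecting a Stanley decomposition with $S'/I$, the trick in Lemma 1.1. Your slicing is in effect the global, iterated version of that intersection trick, and it cleanly isolates the one hard input: the value of $\sdepth$ for a complete intersection of pure powers.

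That hard input is where your proposal has a genuine gap. The base case $\sdepth_T(J_T)=\lceil (n-m)/2\rceil$ for \emph{arbitrary} exponents cannot be reached "one exponent at a time using Lemma 1.1 and Lemma 1.3": those lemmas compute quotients such as $(x_1^b,J)/(x_1^b,I)$ and $(x_1^a,J)/(x_1^b,J)$, where the larger power lies in the denominator ideal, and the only reassembly tool available, $\sdepth(M)\geq\min\{\sdepth(M'),\sdepth(M'')\}$ for a short exact sequence, bounds the \emph{middle} term from below. Applied to $0\to(x_1^a,J)\to(x_1,J)\to(x_1,J)/(x_1^a,J)\to 0$ it transfers information in the wrong direction, from the high-exponent ideal to the squarefree one, never conversely; and sdepth is not known to be functorial under the substitution $x_i\mapsto x_i^{a_i}$, so "the overlap pattern is independent of the $a_i$" is precisely what needs proof. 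Making it precise is the content of the interval-partition lifting argument in \cite{mirc} (see also \cite{shen}), which is exactly what the paper cites, as \cite[Theorem 1.3]{mirc}, for its own $m=0$ step. The fix is simply to quote that result rather than re-derive it; with that citation your argument is complete, and the concluding depth computation is fine ($\depth_S(J/I)=1$ holds for all $0\le m<n$, not only $m=0$, by the depth lemma applied to $0\to J/I\to S/I\to S/J\to 0$, since $S/J$ is Artinian).
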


\begin{proof}
We use induction on $m$. If $m=0$, by \cite[Theorem 1.3]{mirc}, $\sdepth_S((x_1^{a_1},\ldots,x_n^{a_n}))=\left\lceil \frac{n}{2} \right\rceil = n - \left\lfloor  \frac{n}{2} \right\rfloor$, as required. If $m=n$, then $(x_1^{a_1},\ldots,x_n^{a_n})/(x_1^{b_1},\ldots,x_n^{b_m})$ is a finite $K$-vector space and thus its Stanley depth is $0$. 

Now, assume $1\leq m < n$. We denote $S'=K[x_2,\ldots,x_n]$, $J=(x_2^{a_2},\ldots,x_n^{a_n})\subset S'$ and $I=(x_2^{b_2},\ldots,x_m^{b_m})\subset S'$. By induction hypothesis, we have $\sdepth_{S'}(J/I)=n-1-(m-1) - \left\lfloor \frac{n-m}{2} \right\rfloor = n - m - \left\lfloor \frac{n-m}{2} \right\rfloor$. On the other hand, by \cite[Theorem 1.1]{asia1} or \cite[Lemma 3.6]{hvz}, $\sdepth_{S'}(S'/I) = n-m$. Thus, according to Lemma $1.4$, we have $\sdepth_S((x_1^{a_1},\ldots,x_n^{a_n})/(x_1^{b_1},\ldots,x_m^{b_m}))\geq \sdepth_{S'}(J/I) =
n-m-\left\lfloor \frac{n-m}{2} \right\rfloor$.

If $a_1=b_1$, by Lemma $1.1$, we are done. Assume $a_1<b_1$. We denote $a=a_1$, $b=b_1$ and we consider the decomposition $(x_1^a,J)/(x_1^b,I) \cong \bigoplus_{i=0}^{a-1} x_1^i(J/I) \oplus \bigoplus_{i=a}^{b-1} x_1^i(S'/I)$ given by Lemma $1.4$. As in the proof of Lemma $1.1$, we consider a Stanley decomposition $(x_1^a,J)/(x_1^b,I) = \bigoplus_{j=1}^r v_jK[Z_j]$. It follows that $J/I = ((x_1^a,J)/(x_1^b,I))\cap (S'/I)$ and thus, $J/I=\bigoplus_{x_1\nmid v_i}v_i K[Z_i]$. 

In order to complete the proof, notice that $\depth((x_1^{a_1},\ldots,x_n^{a_n})/(x_1^{b_1},\ldots,x_m^{b_m})) = 1$ if $n>m$ and $0$, if $m=n$.
\end{proof}

\section{The case of complete intersection ideals}

\begin{lema}
Let $1\leq m<n$ be an integer, $I_1\subsetneq J_1\subset S':=K[x_1,\ldots,x_m]$ be two distinct monomial ideals and let $I\subset S''=K[x_{m+1},\ldots,x_n]$ be a monomial ideal. Then 
\[ \sdepth_S \frac{(J_1,I)}{(I_1,I)} \geq \sdepth_{S''} \left( \frac{S''}{I} \right) + \sdepth_{S'} \left( \frac{J_1}{I_1} \right).\]
\end{lema}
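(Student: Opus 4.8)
The plan is to realize the quotient $(J_1,I)/(I_1,I)$ as a tensor product of $J_1/I_1$ and $S''/I$ over $K$, and then to build a Stanley decomposition of this product out of Stanley decompositions of the two factors by merging their variable sets, which involve disjoint blocks of indeterminates.

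First I would pin down the monomial $K$-basis of $(J_1,I)/(I_1,I)$. Writing any monomial $u\in S$ uniquely as $u=u'u''$ with $u'\in S'$ and $u''\in S''$, a short bookkeeping argument shows that $u\in(J_1,I)$ iff $u'\in J_1$ or $u''\in I$, and $u\in(I_1,I)$ iff $u'\in I_1$ or $u''\in I$. Hence $u\in(J_1,I)\setminus(I_1,I)$ forces $u''\notin I$, which in turn forces $u'\in J_1\setminus I_1$; conversely, any product of such $u'$ and $u''$ lies in the difference. This yields the $\mathbb Z^n$-graded $K$-vector space isomorphism
\[ \frac{(J_1,I)}{(I_1,I)} \cong \frac{J_1}{I_1}\otimes_K\frac{S''}{I}, \]
whose monomial basis is exactly the set of products $u'u''$ with $u'\in J_1\setminus I_1$ and $u''\in S''\setminus I$.

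Next I would fix optimal Stanley decompositions $J_1/I_1=\bigoplus_i u'_iK[Z'_i]$ over $S'$ and $S''/I=\bigoplus_j u''_jK[Z''_j]$ over $S''$, so that $\min_i|Z'_i|=\sdepth_{S'}(J_1/I_1)$ and $\min_j|Z''_j|=\sdepth_{S''}(S''/I)$, with $Z'_i\subset\{x_1,\ldots,x_m\}$ and $Z''_j\subset\{x_{m+1},\ldots,x_n\}$. I then claim that
\[ \frac{(J_1,I)}{(I_1,I)} = \bigoplus_{i,j} u'_iu''_j\,K[Z'_i\cup Z''_j] \]
is a Stanley decomposition over $S$. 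Because $Z'_i$ and $Z''_j$ use disjoint variables, $K[Z'_i\cup Z''_j]=K[Z'_i]\otimes_K K[Z''_j]$ and $u'_iu''_jK[Z'_i\cup Z''_j]=(u'_iK[Z'_i])\otimes_K(u''_jK[Z''_j])$ is a free $K[Z'_i\cup Z''_j]$-module, while the direct sum over all pairs $(i,j)$ reproduces the tensor factorization of the previous step. Its Stanley depth is $\min_{i,j}|Z'_i\cup Z''_j|=\min_i|Z'_i|+\min_j|Z''_j|=\sdepth_{S'}(J_1/I_1)+\sdepth_{S''}(S''/I)$, which is precisely the desired lower bound.

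I expect the only delicate point to be verifying that the merged families genuinely form a Stanley decomposition, namely that the displayed sum is both exhaustive and direct and that each summand is free over the enlarged subring $K[Z'_i\cup Z''_j]$. All of this rests on the disjointness of the two blocks of variables together with the tensor factorizations $S=S'\otimes_K S''$ and $(J_1,I)/(I_1,I)\cong (J_1/I_1)\otimes_K(S''/I)$, so once the isomorphism of the first step is established the remaining verification reduces to routine multigraded bookkeeping.
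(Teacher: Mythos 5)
Your proposal is correct and follows essentially the same route as the paper: both establish that the monomials of $(J_1,I)\setminus(I_1,I)$ are exactly the products $u'u''$ with $u'\in J_1\setminus I_1$ and $u''\in S''\setminus I$, and then combine Stanley decompositions of $J_1/I_1$ and $S''/I$ pairwise into Stanley spaces $u'_iu''_j K[Z'_i\cup Z''_j]$, the additivity of dimensions coming from the disjointness of the two blocks of variables. Your tensor-product phrasing is just a cleaner packaging of the paper's argument, not a different proof.
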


\begin{proof}
Let $u\in (J_1,I)\setminus (I_1,I)$ be a monomial. We write $u=u'\cdot u''$, where $u'\in S'$ and $u''\in S''$. Since $u\in (J_1,I)$, it follows that $u\in J_1S$ or $u\in IS$. On the other hand, since $u\notin (I_1,I)$, it follows that $u\notin I_1S$ and $u\notin IS$. Therefore, we get $u\in J_1S$ and so $u'\in J_1$. Also, since $u\notin IS$, it follows that $u''\notin I$. Similarly, we have $u'\notin I_1$. Thus, $u$ is a product between a monomial from $J_1\setminus I_1$ and a monomial from $S''\setminus I$. Also, if we take two arbitrary monomials $u'\in J_1\setminus I_1$ and $u'' \in S''\setminus I_1$, one can easily check that $u'\cdot u'' \in (J_1,I)\setminus (I_1,I)$. 

In consequence, given two Stanley decompositions $J_1/I_1 = \bigoplus_{i=1}^r u_iK[Z_i]$ and $S''/I = \bigoplus_{j=1}^s v_jK[Y_j]$, it follows that $(J_1,I)/(I_1,I) = \bigoplus_{i=1}^r \bigoplus_{j=1}^s u_iv_jK[Z_i\cup Y_j]$ is a Stanley decomposition and, thus, $\sdepth_S((J_1,I)/(I_1,I))\geq \sdepth_{S'}(J_1/I_1) + \sdepth_{S''}(S''/I)$.
\end{proof}

\begin{lema}
Let $1\leq m<n$ be an integer, $I_1\subsetneq J_1\subset S':=K[x_1,\ldots,x_m]$ be two monomial ideals and 
$I_2\subset J_2\subsetneq S'':=K[x_{m+1},\ldots,x_n]$ be other monomial ideals. Then:
\[ \sdepth_S \frac{(J_1,J_2)}{(I_1,I_2)} \geq \min \{ \sdepth_{S'} \left( \frac{S'}{J_1} \right)+\sdepth_{S''}\left(\frac{J_2}{I_2} \right), 
\sdepth_{S''} \left( \frac{S''}{I_2} \right) +\sdepth_{S'} \left( \frac{J_1}{I_1} \right) \}. \]
\end{lema}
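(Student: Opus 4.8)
The plan is to reduce the statement to Lemma 2.1 (together with its evident symmetric version, obtained by interchanging the two blocks of variables) by splitting $(J_1,J_2)/(I_1,I_2)$ according to the behaviour of a monomial in the first block. Writing a monomial $u\in S$ as $u=u'u''$ with $u'\in S'$ and $u''\in S''$, I would first run the same kind of case analysis as in Lemmas 1.1 and 2.1 on the monomials of $(J_1,J_2)\setminus(I_1,I_2)$. Such a $u$ lies there exactly when $u'\in J_1$ or $u''\in J_2$, while $u'\notin I_1$ and $u''\notin I_2$. Distinguishing the cases $u'\in J_1$ and $u'\notin J_1$ (in the latter $u'\in J_1$ is impossible, forcing $u''\in J_2$, and $u'\notin I_1$ is automatic since $I_1\subset J_1$) yields the decomposition of $\mathbb Z^n$-graded $K$-vector spaces
\[ \frac{(J_1,J_2)}{(I_1,I_2)} \;\cong\; \frac{(J_1,I_2)}{(I_1,I_2)} \;\oplus\; \frac{(J_1,J_2)}{(J_1,I_2)}, \]
where the first summand collects the monomials with $u'\in J_1\setminus I_1$, $u''\in S''\setminus I_2$, and the second those with $u'\in S'\setminus J_1$, $u''\in J_2\setminus I_2$.

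Next I would bound the two summands separately. The first summand is exactly the module treated in Lemma 2.1 with $I=I_2$, so
\[ \sdepth_S\frac{(J_1,I_2)}{(I_1,I_2)} \;\geq\; \sdepth_{S''}\!\left(\frac{S''}{I_2}\right)+\sdepth_{S'}\!\left(\frac{J_1}{I_1}\right). \]
For the second summand I would apply Lemma 2.1 with the roles of the two blocks of variables interchanged, that is, with $S''$ in the role of $S'$, the pair $I_2\subsetneq J_2$ in the role of $I_1\subsetneq J_1$, and $J_1$ in the role of the ideal $I$; since $(J_1,J_2)=(J_2,J_1)$ and $(J_1,I_2)=(I_2,J_1)$, this gives
\[ \sdepth_S\frac{(J_1,J_2)}{(J_1,I_2)} \;\geq\; \sdepth_{S'}\!\left(\frac{S'}{J_1}\right)+\sdepth_{S''}\!\left(\frac{J_2}{I_2}\right). \]
Here I may assume $I_2\subsetneq J_2$; if $I_2=J_2$ the second summand is zero and the claim reduces directly to Lemma 2.1.

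Finally I would glue the two bounds together. In the displayed decomposition the first summand is a genuine $S$-submodule of $(J_1,J_2)/(I_1,I_2)$, since $(I_1,I_2)\subset(J_1,I_2)\subset(J_1,J_2)$, and the second is the corresponding quotient. Hence the union of a Stanley decomposition of each summand is again a Stanley decomposition of $(J_1,J_2)/(I_1,I_2)$, whose $\sdepth$ equals the minimum of the two. Consequently $\sdepth_S((J_1,J_2)/(I_1,I_2))$ is at least the minimum of the two right-hand sides above, which is precisely the asserted bound.

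The step I expect to demand the most care is this last gluing, more precisely the admissibility of the pieces lifted from the quotient summand. One must verify that a Stanley space $\bar w K[Z]$ in $(J_1,J_2)/(J_1,I_2)$, pulled back via the same monomial representative $w$, remains a free $K[Z]$-submodule of $(J_1,J_2)/(I_1,I_2)$. This holds because freeness of $\bar w K[Z]$ in the quotient forces every product $wv$ (with $v$ a monomial in $K[Z]$) to lie outside $(J_1,I_2)$, so each $wv$ is a second-case monomial and therefore lies in $(J_1,J_2)\setminus(I_1,I_2)$; the lift then stays free. The pieces of the submodule $(J_1,I_2)/(I_1,I_2)$ are admissible automatically, so once this verification is in place the two Stanley decompositions combine and the argument is complete.
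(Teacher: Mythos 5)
Your proof is correct and follows essentially the same route as the paper: the paper applies \cite[Lemma 2.1]{okazaki} to the short exact sequence $0 \to (J_1,I_2)/(I_1,I_2) \to (J_1,J_2)/(I_1,I_2) \to (J_1,J_2)/(J_1,I_2) \to 0$ and then bounds the two end terms by Lemma 2.1, exactly as you do. The only difference is that where the paper cites the Okazaki--Yanagawa lemma for the inequality $\sdepth_S(M) \geq \min\{\sdepth_S(L), \sdepth_S(N)\}$, you prove that gluing step by hand (including the correct lifting argument for Stanley spaces of the quotient summand and the careful treatment of the degenerate case $I_2 = J_2$), which makes your argument self-contained but not substantively different.
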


\begin{proof}
We apply \cite[Lemma 2.1]{okazaki} to the short exact sequence
\[ 0 \longrightarrow (J_1,I_2)/(I_1,I_2) \longrightarrow (J_1,J_2)/(I_1,I_2) \longrightarrow (J_1,J_2)/(J_1,I_2) \longrightarrow 0, \]
and thus we are done by Lemma $2.1$.
\end{proof}

If $u\in S$ is a monomial, denote $\supp(u)=\{x_i:\; x_i|u\}$ the \emph{support} of the monomial $u$.

\begin{lema}
Let $u_1,\ldots,u_m \in S$ and $v_1,\ldots,v_m \in S$ be two regular sequence of monomials, such that $u_i|v_i$ and $v_j\neq u_j$ for some index $j$. Then $\sdepth_S((u_1,\ldots,u_m)/(v_1,\ldots,v_m)) = n-m$.
Moreover, $(u_1,\ldots,u_m)/(v_1,\ldots,v_m)$ has a Stanley decomposition with all its Stanley spaces of dimension $n-m$.
\end{lema}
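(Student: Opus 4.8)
The plan is to exploit the fact that a sequence of monomials is regular precisely when the monomials have pairwise disjoint supports. Writing $T_i=\supp(v_i)$, the sets $T_1,\dots,T_m$ are then pairwise disjoint, and since $u_i\mid v_i$ we have $\supp(u_i)\subseteq T_i$; put $R=\{x_1,\dots,x_n\}\setminus(T_1\cup\cdots\cup T_m)$, so that $S=K[R]\otimes_K\bigotimes_{i=1}^m K[T_i]$ and every monomial factors as $\mu=\mu_R\prod_i\mu_i$ with $\mu_i\in K[T_i]$. Because $u_i,v_i$ are supported on $T_i$, one has $u_i\mid\mu\iff u_i\mid\mu_i$ and likewise for $v_i$; hence $\mu$ represents a nonzero element of $M:=(u_1,\dots,u_m)/(v_1,\dots,v_m)$ iff $u_i\mid\mu_i$ for some $i$ and $v_i\nmid\mu_i$ for all $i$. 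Reordering, I assume $u_1\neq v_1$. I first record the two cheap halves: $M$ embeds in $S/(v_1,\dots,v_m)$, a complete intersection of dimension $n-m$, so $\dim M\le n-m$ and, since Stanley depth never exceeds dimension, $\sdepth_S M\le n-m$; and $u_1$ represents a nonzero class, as $v_1\nmid u_1$ (because $u_1\mid v_1$ and $u_1\neq v_1$) while for $i\neq 1$ the nonunit $v_i$ is supported on $T_i$ disjoint from $\supp(u_1)$ and so cannot divide $u_1$. Thus $M\neq 0$, and it suffices to produce a Stanley decomposition of $M$ all of whose Stanley spaces have dimension $n-m$: this forces $\sdepth_S M\ge n-m$ and, together with the bound above, yields both assertions at once.

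The building block is the single–monomial case. For a nonunit monomial $g=x_1^{c_1}\cdots x_k^{c_k}$ in a polynomial ring $P$ in $N$ variables I decompose $P/(g)$ by the \emph{first place of smallness}: a monomial $\mu\notin(g)$ has a well-defined least index $\ell\le k$ with $\deg_{x_\ell}\mu<c_\ell$, and fixing $\ell$ together with $j:=\deg_{x_\ell}\mu\in\{0,\dots,c_\ell-1\}$ produces the Stanley space $x_1^{c_1}\cdots x_{\ell-1}^{c_{\ell-1}}x_\ell^{\,j}\,K[\{x_1,\dots,x_N\}\setminus\{x_\ell\}]$. These spaces are pairwise disjoint, exhaust $P/(g)$, and each has dimension $N-1$. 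In particular $K[T_i]/(u_i)$, $K[T_i]/(v_i)$, and $(u_i)/(v_i)\cong K[T_i]/(v_i/u_i)$ all admit Stanley decompositions in which every Stanley space has codimension one inside its block.

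For the general module I partition $M$ by the first block witnessing membership in the top ideal: for $\mu$ representing a nonzero class set $p(\mu)=\min\{i:u_i\mid\mu_i\}$, and let $M_k$ be the span of the monomials with $p(\mu)=k$. Unwinding the divisibility conditions, and using that $u_i\mid v_i$ (so $u_i\nmid\mu_i$ already forces $v_i\nmid\mu_i$), a monomial lies in $M_k$ exactly when $\mu_i\in K[T_i]\setminus(u_i)$ for $i<k$, $\mu_k\in(u_k)\setminus(v_k)$, $\mu_i\in K[T_i]\setminus(v_i)$ for $i>k$, and $\mu_R$ is arbitrary. As the blocks $T_1,\dots,T_m,R$ involve disjoint variables, this identifies $M_k$ with the $K$-tensor product of $K[T_i]/(u_i)$ for $i<k$, of $(u_k)/(v_k)$, of $K[T_i]/(v_i)$ for $i>k$, and of $K[R]$. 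Taking the product of the block decompositions of the previous paragraph — exactly the construction already used in Lemma $2.1$ — gives a Stanley decomposition of $M_k$ in which every Stanley space has dimension $\sum_{i\neq k}(|T_i|-1)+(|T_k|-1)+|R|=\bigl(\sum_i|T_i|+|R|\bigr)-m=n-m$. Summing over $k=1,\dots,m$ (discarding $M_k=0$ whenever $u_k=v_k$) yields $M=\bigoplus_k M_k$ and hence the desired homogeneous Stanley decomposition.

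The delicate point is the \emph{Moreover} clause: it is not enough to know $\sdepth_S M\ge n-m$, since I must keep every Stanley space at dimension exactly $n-m$. This is precisely what the block–tensor structure buys, as each of the $m$ factors contributes codimension one and $K[R]$ contributes none. One could instead induct on $m$, splitting off the block of $v_1$ and applying Lemma $2.2$: both terms of the minimum evaluate to $(|T_1|-1)+\bigl((n-|T_1|)-(m-1)\bigr)=n-m$, with the degenerate case $(u_2,\dots,u_m)=(v_2,\dots,v_m)$ covered directly by Lemma $2.1$. That route, however, only delivers the inequality unless one also checks that the short–exact–sequence construction behind Lemma $2.2$ preserves the dimensions of the Stanley spaces, a verification which the explicit partition above sidesteps entirely.
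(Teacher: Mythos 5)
Your proof is correct, but it takes a genuinely different route from the paper's. The paper proves the lower bound by induction on $m$: the base case is $(u_1)/(v_1)\cong S/(v_1/u_1)$, whose Stanley depth is $n-1$ by Rauf's theorem, and the inductive step applies Lemma $2.2$ (hence the Okazaki--Yanagawa exact-sequence lemma) with $J_1=(u_1,\ldots,u_{m-1})$, $J_2=(u_m)$, $I_1=(v_1,\ldots,v_{m-1})$, $I_2=(v_m)$; for the upper bound it argues \emph{per Stanley space}: if $uK[Z]\subset (u_1,\ldots,u_m)/(v_1,\ldots,v_m)$, then $v_iS\cap uK[Z]=(0)$ forces, for each $i$, a variable $x_{j_i}\in\supp(v_i)\setminus Z$, and disjointness of the supports of the $v_i$ gives $m$ distinct such variables, so $|Z|\le n-m$. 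You instead construct one explicit global decomposition: partitioning monomial classes by the first block index $k$ with $u_k\mid \mu_k$ identifies each piece $M_k$ with a tensor product of $K[T_i]/(u_i)$ ($i<k$), $(u_k)/(v_k)$, $K[T_i]/(v_i)$ ($i>k$) and $K[R]$, each block decomposed in codimension one by the first place of smallness; this yields $\sdepth\geq n-m$ and the \emph{Moreover} clause simultaneously and constructively, with the upper bound supplied by $\sdepth\le\dim\le n-m$ --- exactly the alternative the paper itself records in the remark following the lemma. One correction to your closing comment, though: the inductive route does deliver the \emph{Moreover} clause without checking that Lemma $2.2$ preserves dimensions of Stanley spaces, because the paper's upper-bound argument is per-space (every Stanley space of \emph{every} decomposition has dimension at most $n-m$), so any decomposition realizing $\sdepth=n-m$ automatically has all spaces of dimension exactly $n-m$; your concern arises only because your chosen upper bound $\sdepth\le\dim$ is not per-space. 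In exchange, your argument is induction-free, makes the decomposition completely explicit, and reproves the base-case fact $\sdepth(P/(g))=N-1$ along the way, whereas the paper's proof is shorter by leaning on Lemmas $2.1$, $2.2$ and known results.
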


\begin{proof}
We use induction on $m\geq 1$. If $m=1$, then $(u_1)/(v_1)\cong S/(v_1/u_1)$ and therefore $\sdepth_S((u_1)/(v_1))=n-1$, by \cite[Theorem $1.1$]{asia1}. Assume $m>1$. We apply Lemma $2.2$ for $J_1=(u_1,\ldots,u_{m-1})$, $J_2=(u_m)$, $I_1=(v_1,\ldots,v_{m-1})$ and $I_2=(v_m)$.

We get $\sdepth_S((u_1,\ldots,u_m)/(v_1,\ldots,v_m)) \geq \min \{ \sdepth_S(S/J_1) - 1, \sdepth_S(S/I_2) - 1 \} = n-m+1-1 = n-m$. In order to prove the opposite inequality, let $uK[Z]$ be a s Stanley space of $(u_1,\ldots,u_m)/(v_1,\ldots,v_m)$. Since $v_iS \cap u K[Z] = (0)$, it follows that there exists an index $j_i$ such that $x_{j_i}\notin Z$. Now, since the $v_1, \ldots v_m$ is a regular sequence, their supports are disjoint and therefore $\{x_{j_1},\ldots,x_{j_m}\}$ is a set of $m$ variables which do not belong to $Z$ and thus $|Z|\leq n-m$.
\end{proof}

Note that the $"\leq"$ follows also from the inequalities $\sdepth_S((u_1,\ldots,u_m)/(v_1,\ldots,v_m))\leq \dim((u_1,\ldots,u_m)/(v_1,\ldots,v_m)) \leq \dim(S/(v_1,\ldots,v_m)) = n - m$.

\begin{teor}
Let $I\subsetneq J\subset S$ be two monomial complete intersection ideals. Assume $J$ is generated by $q$ monomials and $I$ is generated by $p$ monomials. Then:
\[ n-p \geq \sdepth(J/I) \geq n-p - \left\lfloor \frac{q-p}{2} \right\rfloor. \]
\end{teor}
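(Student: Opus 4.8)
The plan is to prove the two inequalities separately. For the upper bound I would argue exactly as in the remark following Lemma $2.3$: since $J/I$ is a submodule of $S/I$, we have $\sdepth_S(J/I) \leq \dim_S(J/I) \leq \dim_S(S/I) = n-p$, the last equality holding because $I$ is a complete intersection of height $p$. For the lower bound, the first step is a structural normalization. Write $J = (f_1,\ldots,f_q)$ and $I = (v_1,\ldots,v_p)$ with the $f_i$ (resp.\ the $v_k$) a regular sequence of monomials, hence of pairwise disjoint support. Each $v_k$ lies in $J$ and so is divisible by some $f_i$; two distinct $v_k,v_{k'}$ cannot be divisible by the same $f_i$, since then $\supp(f_i)$ would sit inside the disjoint supports $\supp(v_k)\cap\supp(v_{k'})$. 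Thus the assignment is injective, giving $p\leq q$ and, after relabelling, $f_i \mid v_i$ for $1\leq i\leq p$. Set $J' = (f_1,\ldots,f_p)$; it is a complete intersection (a subsequence of a regular sequence), and $I\subseteq J'\subseteq J$ because $v_i\in(f_i)\subseteq J'$.

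Next I would feed the short exact sequence $0 \to J'/I \to J/I \to J/J' \to 0$ into \cite[Lemma 2.1]{okazaki}, obtaining $\sdepth_S(J/I)\geq\min\{\sdepth_S(J'/I),\sdepth_S(J/J')\}$ (when $I=J'$ one simply has $J/I\cong J/J'$). Applying Lemma $2.3$ to the matched sequences $f_1,\ldots,f_p$ and $v_1,\ldots,v_p$ — where $f_i\mid v_i$ and $f_j\neq v_j$ for some $j$ whenever $I\subsetneq J'$ — gives $\sdepth_S(J'/I)=n-p$. Since $n-p\geq n-p-\lfloor(q-p)/2\rfloor$, the whole problem reduces to the estimate $\sdepth_S(J/J')\geq n-p-\lfloor(q-p)/2\rfloor$.

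The point of passing to $J/J'$ is that it is a clean tensor object, the coupling between $I$ and $J$ having been absorbed into the (large) term $J'/I$. Grouping the variables into the pairwise disjoint sets $A=\bigcup_{i\leq p}\supp(f_i)$, $B=\bigcup_{j>p}\supp(f_j)$ and the free variables $C$, the isomorphism $(I_1+I_2)/I_2\cong I_1/(I_1\cap I_2)$, together with the fact that for monomial ideals in disjoint variables the intersection equals the product, yields the $\mathbb Z^n$-graded isomorphism $J/J'\cong L\otimes_K(K[A\cup C]/Q)$, where $L=(f_{p+1},\ldots,f_q)\subset K[B]$ and $Q=(f_1,\ldots,f_p)\subset K[A]$. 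Tensoring Stanley decompositions over disjoint variable sets, precisely the mechanism of Lemma $2.1$, gives $\sdepth_S(J/J')\geq\sdepth_{K[B]}(L)+\sdepth_{K[A\cup C]}(K[A\cup C]/Q)$. The second summand equals $|A|+|C|-p$, because $K[A\cup C]/Q=(K[A]/Q)\otimes_K K[C]$ and the Stanley depth of $K[A]/Q$ for a monomial complete intersection of height $p$ is $|A|-p$ (the value used in the proof of Theorem $1.5$). Thus it remains to establish $\sdepth_{K[B]}(L)\geq |B|-\lfloor(q-p)/2\rfloor$, after which the three contributions add up to $|A|+|B|+|C|-p-\lfloor(q-p)/2\rfloor=n-p-\lfloor(q-p)/2\rfloor$.

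The hard part is exactly this last estimate: pinning down the Stanley depth of the ``extra'' complete intersection ideal $L$, generated by $q-p$ monomials of pairwise disjoint support. For pure powers the value $|B|-\lfloor(q-p)/2\rfloor$ is the content of \cite[Theorem 1.3]{mirc} (equivalently Theorem $1.5$ with $m=0$); the work is to show that the general disjoint-support case gives the same number. I would do this by induction on $q-p$, peeling off generators two at a time and invoking Lemma $2.2$, with base cases $\sdepth_{K[B]}((f))=|B|$ and $\sdepth(K[B]/(f))=|B|-1$. It is precisely the $\min$ in Lemma $2.2$ accumulating once per pair of extra generators that manufactures the floor function, and hence the entire lower bound.
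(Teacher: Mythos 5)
Everything up to the last step of your proposal is correct and is essentially the paper's own argument in lightly different clothing. The upper bound via $\sdepth_S(J/I)\le\dim(J/I)\le\dim(S/I)=n-p$ is exactly the route the paper sanctions in the remark after Lemma $2.3$ (the paper's proof of Theorem $2.4$ itself uses the equivalent Stanley-space argument: each $v_i$ forces a distinct variable out of $Z$). Your normalization $f_i\mid v_i$, with injectivity forced by the disjoint supports of the $v_k$, is verbatim the paper's. Where the paper uses the multigraded direct sum $(J_1,J_2)/I=(J_1,J_2)/J_1\oplus J_1/I$ together with Lemma $2.3$, you use the short exact sequence $0\to J'/I\to J/I\to J/J'\to 0$ and \cite[Lemma 2.1]{okazaki}; this is the same mechanism (it is how the paper proves its Lemma $2.2$), and you even handle the edge case $I=J'$. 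The proofs diverge only at the estimate for $J/J'$: the paper quotes \cite[Corollary 2.4(5)]{mirci} together with \cite[Theorem 1.1]{asia1} to get $\sdepth_S((J_1,J_2)/J_1)\ge\sdepth_S(S/J_1)-\left\lfloor\frac{q-p}{2}\right\rfloor$ in one stroke, whereas you reduce, via a correct tensor splitting (itself an instance of Lemma $2.1$), to the claim $\sdepth_{K[B]}(L)\ge|B|-\left\lfloor\frac{q-p}{2}\right\rfloor$ for the complete intersection $L=(f_{p+1},\ldots,f_q)\subset K[B]$.

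Here is the genuine gap: your proposed induction for that last estimate, peeling off generators and invoking Lemma $2.2$, does not work, and cannot work. Set $c=q-p$ and $N=|B|$. Splitting $L$ into $J_1$ with $a$ generators and $J_2$ with $c-a$ generators ($1\le a\le c-1$) in disjoint variables, and feeding Lemma $2.2$ the known and inductive values $\sdepth_{S'}(S'/J_1)=\dim S'-a$, $\sdepth_{S''}(J_2)\ge\dim S''-\left\lfloor\frac{c-a}{2}\right\rfloor$ and $\sdepth_{S'}(J_1)\ge\dim S'-\left\lfloor\frac{a}{2}\right\rfloor$, yields $\min\{N-\left\lfloor\frac{c+a}{2}\right\rfloor,\, N-\left\lfloor\frac{a}{2}\right\rfloor\}$, and $\left\lfloor\frac{c+a}{2}\right\rfloor\le\left\lfloor\frac{c}{2}\right\rfloor$ holds only when $a=1$ and $c$ is even. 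So peeling a pair ($a=2$) falls short of the target by one every time --- the $\min$ does not ``accumulate once per pair'' as you claim --- and when $c$ is odd no split works at all. The smallest instance already kills the scheme: for three monomials with disjoint supports, every Lemma $2.2$ split gives at best $N-2$, while the true value is $N-1$; for example $\sdepth(x_1,x_2,x_3)=2$ in $K[x_1,x_2,x_3]$ is realized by $x_1K[x_1,x_2]\oplus x_2K[x_2,x_3]\oplus x_3K[x_3,x_1]\oplus x_1x_2x_3K[x_1,x_2,x_3]$, whose Stanley spaces straddle the supports of different generators --- something no disjoint-variable splitting of the type in Lemmas $2.1$ and $2.2$ can produce. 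This odd case is precisely the hard combinatorial content of \cite{par}, \cite{mirc} and \cite{shen}, not a formal consequence of the splitting lemmas. The repair is one line: $L$ is a monomial complete intersection in $K[B]$ minimally generated by $q-p$ monomials, so \cite[Theorem 1.1]{shen} gives $\sdepth_{K[B]}(L)=|B|-\left\lfloor\frac{q-p}{2}\right\rfloor$ exactly, the same citation the paper uses for the case $p=0$; with that substitution your argument is complete and correct.
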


\begin{proof}
Assume $J = (u_1,\ldots,u_q)$ and $I=(v_1,\ldots,v_p)$. Since $v_1,\ldots,v_p$ is a regular sequence on $S$, their supports are disjoint. If we take a Stanley space $vK[Z]\subset J/I$ it follows, as in the proof of Lemma $2.3$ that $|Z|\leq n-p$, and thus $n-p \geq \sdepth_S(J/I)$.

Now, we prove the second inequality. If $p=0$, then, by \cite[Theorem 1.1]{shen}, we have $\sdepth_S(J/I) = \sdepth_S(J) = n - \left\lfloor \frac{q}{2} \right\rfloor$ and we are done. Also, in the case $p=q$, we are done by Lemma $2.3$. Assume $1\leq p < q$. Since $I\subset J$ we can assume that $u_1|v_1$. Note that $ u_1 \nmid v_j$ for all $j>1$. Indeed, if $p\geq 2$ and $u_1|v_2$, then $\supp(v_1)\cap\supp(v_2) \supseteq \supp(u_1)$, a contradiction with the fact that $v_1,v_2$ is a regular sequence. Thus, using induction, we may assume $v_i|u_i$ for all $1\leq i\leq p$. We denote $J_1=(u_1,\ldots,u_p)$ and $J_2=(u_{p+1},\ldots,u_q)$.

We use decomposition $(J_1,J_2)/I = (J_1,J_2)/J_1 \oplus J_1/I$.
Using \cite[Corollary 2.4(5)]{mirci} and \cite[Theorem 1.1]{asia1}, we get
$ \sdepth_S((J_1,J_2)/J_1)\geq \sdepth_S(S/J_1) - \left\lfloor \frac{q-p}{2} \right\rfloor = n - p - \left\lfloor \frac{q-p}{2} \right\rfloor$. 
On the other hand if $I\subsetneq J_1$, by Lemma $2.3$, $\sdepth_S(J_1/I)=n-p$. Thus, we get $\sdepth_S((J_1,J_2)/I)\geq n - p - \left\lfloor \frac{q-p}{2} \right\rfloor$, as required.
\end{proof}

\begin{cor}
With the notations of $2.4$, if $q=p+1$, then $\sdepth_S(J/I) = n-p$.
\end{cor}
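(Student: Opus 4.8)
The plan is to read off the conclusion directly from Theorem $2.4$, which already supplies both a lower and an upper bound for $\sdepth_S(J/I)$; the whole content of the corollary is that under the hypothesis $q=p+1$ these two bounds collapse to the same value.

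First I would invoke Theorem $2.4$ with the given data: $I\subsetneq J\subset S$ are monomial complete intersection ideals, $J$ minimally generated by $q$ monomials and $I$ by $p$ monomials, so that
\[ n-p \;\geq\; \sdepth_S(J/I) \;\geq\; n-p-\left\lfloor \frac{q-p}{2} \right\rfloor. \]
Next I would substitute the hypothesis $q=p+1$, which gives $q-p=1$, and compute the floor term, $\left\lfloor \frac{1}{2} \right\rfloor = 0$. Therefore the lower bound becomes $n-p-0 = n-p$, which coincides exactly with the upper bound.

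Finally, since $n-p \geq \sdepth_S(J/I) \geq n-p$, the two inequalities force $\sdepth_S(J/I) = n-p$, completing the proof.

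I do not anticipate any obstacle here: the corollary is an immediate numerical specialization, and all the genuine work — establishing the upper bound via the disjointness of the supports of a regular sequence, and the lower bound via the decomposition $(J_1,J_2)/I = (J_1,J_2)/J_1 \oplus J_1/I$ together with the estimates of Lemma $2.3$ and the cited results — has already been carried out in Theorem $2.4$. The only point worth stating explicitly is that $q=p+1$ is precisely the threshold at which the rounding term $\left\lfloor (q-p)/2 \right\rfloor$ vanishes, so that the sharp bounds pin down $\sdepth_S(J/I)$ exactly.
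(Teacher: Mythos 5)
Your proposal is correct and matches exactly what the paper intends: Corollary $2.5$ is stated without proof precisely because it follows immediately from the two-sided bound of Theorem $2.4$, where $q=p+1$ makes $\left\lfloor \frac{q-p}{2} \right\rfloor = 0$ and collapses the bounds to $n-p$. Nothing is missing.
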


\begin{cor}
If $J\subsetneq I\subset S$ are two monomial complete intersection ideals, then $\sdepth_S(J/I)\geq \depth_S(J/I)$.
\end{cor}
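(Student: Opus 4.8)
The plan is to read off the lower bound on $\sdepth_S(J/I)$ already provided by Theorem $2.4$ and to match it against an explicit computation of $\depth_S(J/I)$. (I read the hypothesis as $I\subsetneq J\subset S$, so that $J/I$ is the quotient treated in Theorem $2.4$.) Write $J=(u_1,\ldots,u_q)$ and $I=(v_1,\ldots,v_p)$ with the $u_i$ (resp.\ the $v_j$) a regular sequence of monomials. Then $S/J$ and $S/I$ are complete intersections, hence Cohen--Macaulay, with $\depth_S(S/J)=n-q$ and $\depth_S(S/I)=n-p$. Since $I\subseteq J$ forces $\operatorname{ht}(I)\leq\operatorname{ht}(J)$, and the height of a monomial complete intersection equals its number of generators, we have $p\leq q$.

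The key step is to determine $\depth_S(J/I)$ from the short exact sequence of $\mathbb Z^n$-graded modules
\[ 0 \longrightarrow J/I \longrightarrow S/I \longrightarrow S/J \longrightarrow 0. \]
Feeding $\depth_S(S/I)=n-p$ and $\depth_S(S/J)=n-q$ into the Depth Lemma gives, from the inequality for the submodule, $\depth_S(J/I)\geq\min\{n-p,\,n-q+1\}$; and from the inequality for the cokernel, $n-q\geq\min\{\depth_S(J/I)-1,\,n-p\}$. When $p<q$ the second inequality cannot have $n-p$ as its minimum (that would force $q\leq p$), so it yields $\depth_S(J/I)\leq n-q+1$; combined with the first estimate, $\depth_S(J/I)=n-q+1$. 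When $p=q$ it suffices to use the crude bound $\depth_S(J/I)\leq\dim_S(J/I)\leq\dim_S(S/I)=n-p$.

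It then remains to compare with the lower bound $\sdepth_S(J/I)\geq n-p-\left\lfloor\frac{q-p}{2}\right\rfloor$ of Theorem $2.4$. For $p<q$ the desired inequality $n-p-\left\lfloor\frac{q-p}{2}\right\rfloor\geq n-q+1$ is equivalent to $\left\lceil\frac{q-p}{2}\right\rceil\geq 1$, which holds because $q-p\geq 1$. For $p=q$ the floor term vanishes, so Theorem $2.4$ gives $\sdepth_S(J/I)\geq n-p$, while we have just seen $\depth_S(J/I)\leq n-p$; hence $\sdepth_S(J/I)\geq\depth_S(J/I)$ in every case.

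The only genuine work is pinning down $\depth_S(J/I)$, and within that the delicate point is extracting the \emph{upper} bound $\depth_S(J/I)\leq n-q+1$ from the cokernel inequality of the Depth Lemma, together with checking that it is not vacuous by ruling out $\depth_S(J/I)-1\geq\depth_S(S/I)$ via $p<q$. Once the depth is known the arithmetic comparison with the floor term is immediate, and the Stanley inequality follows at once from Theorem $2.4$.
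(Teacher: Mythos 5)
Your proposal is correct and follows essentially the same route as the paper: the paper's proof simply asserts $\depth_S(J/I)=n-q+1$ if $q>p$ (and $n-q$ if $q=p$) and applies Theorem $2.4$, while you supply the standard derivation of these depth values via the Depth Lemma on $0\to J/I\to S/I\to S/J\to 0$. Your substitution of the crude bound $\depth_S(J/I)\leq\dim_S(S/I)=n-p$ in the case $p=q$ is a harmless (even slightly more economical) variant of the paper's exact value $n-q$.
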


\begin{proof}
It is enough to notice that $\depth_S(J/I)=n-q+1$ if $q>p$, or $\depth_S(J/I)=n-q$ if $q=p$ and then apply Theorem $2.4$.
\end{proof}

\begin{lema}
Let $1\leq m<n$ be an integer, $I\subsetneq J\subset S':=K[x_1,\ldots,x_m]$ be two distinct monomial ideals and let $u \in S''=K[x_{m+1},\ldots,x_n]$ be a monomial. Then 
\[ \sdepth_S \frac{(J,u)}{(I,u)} = \sdepth_{S}\left(\frac{JS}{IS} \right) - 1.\]
\end{lema}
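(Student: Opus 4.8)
The plan is to prove the identity by computing both sides exactly. Writing $M:=(J,u)/(I,u)$ and setting $d:=\sdepth_{S'}(J/I)$, I will establish the two exact values
\[
\sdepth_S(M) = d + (n-m-1), \qquad \sdepth_S\left(\frac{JS}{IS}\right) = d + (n-m),
\]
and subtract. Throughout I assume $u\neq 1$, since otherwise the quotient is trivial. For each of the two modules I would obtain a lower bound from Lemma $2.1$ and a matching upper bound from a slicing argument; the conceptual point is that the two modules are governed by the \emph{same} slicing, and differ by exactly one free dimension at both ends.

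For the lower bounds I would apply Lemma $2.1$ twice, taking the $S''$-ideal to be $(u)$ and $(0)$ respectively. This gives $\sdepth_S(M)\geq \sdepth_{S''}(S''/(u))+d$ and $\sdepth_S(JS/IS)\geq \sdepth_{S''}(S'')+d$. Since $S''$ has $n-m$ variables and $(u)$ is principal, \cite[Theorem 1.1]{asia1} yields $\sdepth_{S''}(S''/(u))=n-m-1$ and $\sdepth_{S''}(S'')=n-m$, so the two lower bounds are exactly $d+(n-m-1)$ and $d+(n-m)$.

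The heart of the argument, and the step I expect to be the main obstacle, is the upper bounds; the key observation is that a Stanley decomposition of $M$ can never free all variables of $\supp(u)$ simultaneously. Indeed $u\cdot(J,u)\subseteq (u)\subseteq (I,u)$, so $u$ annihilates $M$; hence in any Stanley decomposition $M=\bigoplus_j w_jK[Z_j]$ no $Z_j$ can contain $\supp(u)$, for otherwise $u\in K[Z_j]$ and $uw_j$ would be a nonzero element of the free summand $w_jK[Z_j]$ that vanishes in $M$. Therefore each $Z_j$ omits a variable of $\supp(u)\subseteq\{x_{m+1},\dots,x_n\}$, so that $|Z_j\cap\{x_{m+1},\dots,x_n\}|\leq n-m-1$.

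Finally I would convert a Stanley decomposition into one of $J/I$ by slicing along the $S''$-degree-zero part. Writing a monomial as $p=p'p''$ with $p'\in S'$ and $p''\in S''$, the monomials with $p''=1$ form exactly a copy of $J/I$; intersecting $M=\bigoplus_j w_jK[Z_j]$ with this slice keeps only the pieces with $w_j''=1$ and freezes all $S''$-variables, producing a genuine Stanley decomposition $J/I=\bigoplus_{w_j''=1} w_jK[Z_j']$ over $S'$ with $Z_j'=Z_j\cap\{x_1,\dots,x_m\}$. Hence $d\geq \min|Z_j'|$. For $M$ the previous paragraph gives $|Z_j'|=|Z_j|-|Z_j''|\geq \sdepth_S(M)-(n-m-1)$, so $\sdepth_S(M)\leq d+(n-m-1)$; running the identical slicing on $JS/IS$, where no support constraint applies and merely $|Z_j''|\leq n-m$, gives $\sdepth_S(JS/IS)\leq d+(n-m)$. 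Matching these with the lower bounds proves the two exact values, and subtracting gives the lemma. The only routine verification is that the sliced family is disjoint and covers $J/I$, which is immediate from the directness of the original decomposition together with the fact that $J/I\neq 0$.
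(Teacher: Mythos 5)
Your proposal is correct and takes essentially the same route as the paper: the lower bound via Lemma $2.1$ together with $\sdepth_{S''}(S''/(u))=n-m-1$, and the upper bound by intersecting an optimal Stanley decomposition with the slice $J/I$ and noting that no $Z_j$ can contain all the variables supporting $u$ (the paper phrases this as $uS\cap v_jK[Z_j]=\{0\}$, you as $u$ annihilating the module, which is the same obstruction). The only cosmetic difference is that you reprove the standard identity $\sdepth_S(JS/IS)=\sdepth_{S'}(J/I)+(n-m)$ by running the slicing a second time, whereas the paper simply invokes it.
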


\begin{proof}
Since $\sdepth_{S''}(S''/(u)) = n-m-1$, the $"\geq"$ inequality follows by Lemma $2.1$. In order to prove the other inequality, let $(J,u)/(I,u)= \bigoplus_{j=1}^r v_jK[Z_j]$ be a Stanley decomposition, with its Stanley depth equal with $\sdepth_S((J,u)/(I,u))$. Note that $J/I = ((J,u)/(I,u))\cap (S'/I)$. Indeed, as in the proof of Lemma $1.1$, one can easily see that a monomial $v\in (J,u)\setminus (I,u)$ if and only if $v=v'\cdot v''$ for some monomials $v'\in J\setminus I$ and $v''\in S''\setminus (u)$. 

It follows that $J/I=(\bigoplus_{j=1}^r v_jK[Z_j]) \cap (S'/I) = \bigoplus_{j=1}^r (v_jK[Z_j]\cap (S'/I))$. If $v_j\notin S'$, obviously, then $v_jK[Z_j]\cap (S'/I)=\{0\}$. If $v_j\in S'$, then $v_jK[Z_j]\cap (S'/I) = v_jK[Z_j\setminus \{x_{m+1},\ldots,x_n\} ]$. Note that $\{x_{m+1},\ldots,x_n\} \nsubseteq Z_j$, because $uS\cap v_jK[Z_j] = \{0\}$. Thus, $|Z_j\setminus \{x_{m+1},\ldots,x_n\}|\geq |Z_j|-n+m+1$. Thus, we obtained a Stanley decomposition for $J/I$ with its Stanley depth $\geq \sdepth((J,u)/(I,u)) - n + m + 1$. It follows that $\sdepth_{S}(JS/IS)=\sdepth_{S'}(J/I)+n-m \geq \sdepth((J,u)/(I,u)) + 1$, as required.
\end{proof}

\begin{lema}
Let $1\leq m<n$ be an integer, $J\subset S':=K[x_1,\ldots,x_m]$ be a monomial ideal and let $u,v \in S''=K[x_{m+1},\ldots,x_n]$ be two distinct monomials with $u|v$. Then 
\[ \sdepth_S  \frac{(J,u)}{(J,v)}  = \sdepth_{S}\left( \frac{S}{JS} \right) - 1.\]
\end{lema}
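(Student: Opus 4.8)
The plan is to reduce the computation to the two blocks $S'=K[x_1,\ldots,x_m]$ and $S''=K[x_{m+1},\ldots,x_n]$, exactly as in Lemma $2.7$, but with the roles interchanged: here the common ideal $J$ sits in $S'$ while the varying principal ideals sit in $S''$. First I would record the structural description of the module. A monomial $w=w'w''$, with $w'\in S'$ and $w''\in S''$, lies in $(J,u)\setminus(J,v)$ if and only if $w'\in S'\setminus J$ and $w''\in (u)\setminus(v)$. Writing $w_0=v/u$, this gives the $\mathbb Z^n$-graded isomorphism $(J,u)/(J,v)\cong (S'/J)\otimes_K\big((u)/(v)\big)$, where $(u)/(v)\cong S''/(w_0)$ as $S''$-modules via multiplication by $u$. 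I would then use two facts already available in the paper: $\sdepth_{S''}(S''/(w_0))=n-m-1$ by \cite[Theorem $1.1$]{asia1} (since $w_0\neq 1$), and the free-variable identity $\sdepth_S(S/JS)=\sdepth_{S'}(S'/J)+(n-m)$ used in Lemma $2.7$. Thus the target equality is equivalent to $\sdepth_S((J,u)/(J,v))=\sdepth_{S'}(S'/J)+n-m-1$.

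For the inequality ``$\geq$'' I would invoke Lemma $2.1$ with the two sets of variables interchanged, its proof being symmetric in the two blocks: taking the common ideal to be $J\subset S'$ and the varying pair to be $(v)\subsetneq(u)\subset S''$, a Stanley decomposition of $(J,u)/(J,v)$ arises as the product of a Stanley decomposition of $S'/J$ and one of $(u)/(v)$, whence $\sdepth_S((J,u)/(J,v))\geq \sdepth_{S'}(S'/J)+\sdepth_{S''}((u)/(v))=\sdepth_{S'}(S'/J)+n-m-1=\sdepth_S(S/JS)-1$.

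For the reverse inequality I would mimic the intersection argument of Lemma $2.7$, but intersect against the copy of $S'/J$ rather than against $S''$. Concretely, multiplication by $u$ embeds $S'/J$ into $S/(J,v)$ with image inside $(J,u)/(J,v)$, and this image is exactly the set of classes represented by monomials whose $S''$-part equals $u$; thus $S'/J\cong \big((J,u)/(J,v)\big)\cap u(S'/J)$. Starting from a Stanley decomposition $(J,u)/(J,v)=\bigoplus_{j=1}^r w_jK[Z_j]$ realizing the Stanley depth, I intersect each Stanley space with this slice. Fixing the exponents of $x_{m+1},\ldots,x_n$ to match those of $u$ pins down uniquely the exponents of the variables in $Z_j\cap\{x_{m+1},\ldots,x_n\}$, so $w_jK[Z_j]\cap u(S'/J)$ is either $0$ or a single Stanley space $\mu_jK[Z_j\cap\{x_1,\ldots,x_m\}]$; collecting the nonzero ones yields a Stanley decomposition of $S'/J$. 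The key point is that every relevant $Z_j$ omits at least one variable of $\{x_{m+1},\ldots,x_n\}$: otherwise $\supp(v)\subseteq\{x_{m+1},\ldots,x_n\}\subseteq Z_j$ would force some element of $w_jK[Z_j]$ to be divisible by $v$, contradicting $vS\cap w_jK[Z_j]=\{0\}$. Hence $|Z_j\cap\{x_1,\ldots,x_m\}|\geq |Z_j|-(n-m-1)\geq \sdepth_S((J,u)/(J,v))-(n-m-1)$ for each contributing $j$, so $\sdepth_{S'}(S'/J)\geq \sdepth_S((J,u)/(J,v))-(n-m-1)$, which rearranges to $\sdepth_S((J,u)/(J,v))\leq \sdepth_S(S/JS)-1$.

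The main obstacle is this ``$\leq$'' step, and within it the verification that intersecting a free Stanley space with the shifted slice $u(S'/J)$ again produces a single Stanley space of the expected dimension $|Z_j\cap\{x_1,\ldots,x_m\}|$. This is what forces intersecting with the $u$-shifted copy of $S'/J$, where the relevant data of $J$ lives, rather than with the principal part $(u)/(v)$; the latter would only recover $\sdepth_{S''}((u)/(v))=n-m-1$ and yield the far weaker bound $n-2$. Everything else is bookkeeping, together with the degenerate checks $u=1$ and $J=0$, which the same argument covers.
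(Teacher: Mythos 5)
Your proposal is correct and follows essentially the same route as the paper: the lower bound via Lemma $2.1$ with the two blocks of variables interchanged (using $\sdepth_{S''}((u)/(v))=n-m-1$), and the upper bound by intersecting a Stanley decomposition of $(J,u)/(J,v)$ with the slice $u(S'/J)$, which is precisely the ``similar argument as in the proof of Lemma $2.7$'' that the paper invokes. Your write-up merely spells out the details (the single-Stanley-space form of each intersection $w_jK[Z_j]\cap u(S'/J)$ and the fact that $\{x_{m+1},\ldots,x_n\}\nsubseteq Z_j$ since $vS\cap w_jK[Z_j]=\{0\}$) that the paper leaves implicit.
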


\begin{proof}
As in the proof of the previous lemma, by $2.1$, we get the $"\geq"$ inequality. In order to prove the other inequality, note that $(J,u)\setminus (J,v) \cap u(S'/J) = u(S'/J)$. Indeed, a monomial $w\in (J,u)/(J,v)$ if and only if $w=w'\cdot w''$ for some monomials $w'\in S'\setminus J$ and $w''\in (u)\setminus (v)$. Using a similar argument as in the proof of Lemma $2.7$, we are done.
\end{proof}

Now, we are able to prove the following result, which generalizes Theorem $1.5$.

\begin{teor}
Let $u_1,\ldots,u_q\in S$ and $v_1,\ldots,v_p\in S$ be two regular sequences with $u_i|v_i$ for all $1\leq i\leq p$, where $q\geq p$ are positive integers. We consider the monomial ideals $J=(u_1,\ldots,u_q)\subset S$ and $I=(v_1,\ldots,v_p)$. We assume also that $u_{p+1},\ldots,u_q$ is a regular sequence on $S/I$.
Then $\sdepth(J/I) = n - p - \left\lfloor  \frac{q-p}{2} \right\rfloor$.
\end{teor}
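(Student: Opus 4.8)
The plan is to take the lower bound for granted and put all the effort into the reverse inequality. Since $I\subsetneq J$ are monomial complete intersections generated by $p$ and $q$ monomials, Theorem $2.4$ already gives $\sdepth_S(J/I)\geq n-p-\lfloor (q-p)/2\rfloor$ with no extra hypothesis. Hence the entire content of the statement is the matching bound $\sdepth_S(J/I)\leq n-p-\lfloor (q-p)/2\rfloor$, which must be where the additional assumption (that $u_{p+1},\ldots,u_q$ be regular on $S/I$) is used.

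First I would translate the hypotheses into combinatorics of supports. Because $u_1,\ldots,u_q$ and $v_1,\ldots,v_p$ are regular sequences of monomials, each family has pairwise disjoint supports, and $u_i\mid v_i$ gives $\supp(u_i)\subseteq\supp(v_i)$ for $i\leq p$. The extra assumption is, for monomials, equivalent to $\supp(u_j)\cap\supp(v_i)=\emptyset$ for all $j>p$ and $i\leq p$: a monomial is a nonzerodivisor on $S/I$ exactly when its support meets no minimal prime $(x_{k_1},\ldots,x_{k_p})$ with $x_{k_i}\in\supp(v_i)$ of the complete intersection $I$. Thus the variables split into three blocks, namely those occurring in the $v_i$, those occurring in the $u_j$ with $j>p$, and the remaining free variables; this is exactly the clean separation that makes the closed formula, which depends only on $n,p,q$, plausible.

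To see where the value comes from and to organize an induction on $p$, I would peel off the matched pair $(u_1,v_1)$, whose block $\supp(v_1)$ is disjoint from everything else, by splitting the monomials of $J/I$ according to divisibility by $u_1$. This produces a decomposition of $J/I$ into a piece isomorphic to $\big((u_1)/(v_1)\big)\otimes_K\big(S_c/(v_2,\ldots,v_p)\big)$ and a piece isomorphic to $\big(K[\supp(v_1)]/(u_1)\big)\otimes_K\big(\tilde J/\tilde I\big)$, where $S_c$ is the polynomial ring on the complementary variables and $\tilde J/\tilde I=(u_2,\ldots,u_q)/(v_2,\ldots,v_p)$ is the smaller module governed by the induction hypothesis. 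By Lemma $2.1$ and Lemma $2.3$ the first piece contributes $n-p$ and the second the inductive value $n-p-\lfloor(q-p)/2\rfloor$; the base cases are Shen's theorem for $p=0$ and Lemma $2.3$ for $q=p$. Taking minima over the short exact sequence reproduces the lower bound, but nothing more.

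The hard part, as this shows, is the upper bound: the Stanley depth of a direct sum is only bounded below by the minimum of its summands, so the decomposition transports lower estimates but not upper ones. To force the reverse inequality I would exploit that Lemmas $2.7$ and $2.8$ are genuine equalities $\sdepth_S(\cdot)=\sdepth_S(\cdot)-1$, hence propagate upper bounds: the aim is to strip, one at a time, the $n-q$ excess variables (those in the free block, and every variable of a $\supp(v_i)$ or $\supp(u_j)$ beyond the single one needed), each strip lowering both the variable count and the Stanley depth by exactly one, until one reaches the pure-power configuration $n=q$, $v_i=x_i^{b_i}$, $u_j=x_j^{a_j}$, where Theorem $1.5$ supplies the exact value $q-p-\lfloor(q-p)/2\rfloor$; restoring the stripped variables then adds back $n-q$ and yields $n-p-\lfloor(q-p)/2\rfloor$. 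I expect the main obstacle to be precisely the bookkeeping of this stripping, because neither Lemma $2.7$ (which needs the \emph{same} monomial in numerator and denominator) nor Lemma $2.8$ (which needs the remaining ideal identical on both sides) applies verbatim to $J/I$ as written; one must peel a single generator at a time, keeping the others in the complementary block, and likely prove a hybrid of the two lemmas. A viable alternative for this step is a direct argument on an arbitrary Stanley decomposition: each Stanley space $wK[Z]$ already misses one variable per $v_i$, giving $|Z|\le n-p$, and the extra $\lfloor(q-p)/2\rfloor$ should come from a pairing argument on the extra generators $u_{p+1},\ldots,u_q$ in the spirit of Shen's upper bound for the Stanley depth of a complete intersection ideal.
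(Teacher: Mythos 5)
Your skeleton matches the paper's: the lower bound comes for free from Theorem $2.4$, the proof is an induction on $p$ that peels the matched pair $(u_1,v_1)$, the base cases are $p=0$ (Shen's exact value) and $p=q$ (Lemma $2.3$), and all the real content is in the upper bound. But the decisive step --- the upper-bound peel when $u_1\neq v_1$ --- is exactly what you flag as ``the main obstacle'' and never execute, and your primary route for it would not go through as written. Lemmas $2.7$ and $2.8$ peel an entire generator lying in a variable block disjoint from everything else; they give no mechanism for stripping individual variables out of a $\supp(v_i)$ or $\supp(u_j)$, so there is no chain of ``one variable at a time'' reductions, each lowering $\sdepth$ by exactly one, terminating in the pure-power configuration $n=q$ where Theorem $1.5$ applies. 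The paper never invokes Theorem $1.5$ here at all: the induction bottoms out at $p=0$, where \cite{shen} gives $\sdepth_S(J)=n-\left\lfloor q/2 \right\rfloor$, and pure powers are never reached or needed.

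The ``hybrid of the two lemmas'' you predict is precisely how the paper closes the gap, and since it is the one genuinely missing idea in your plan, here is its content. After reordering variables, $\supp(v_1)=\{x_{m+1},\ldots,x_n\}$, and your (correct) support analysis of the regularity hypothesis puts $u_2,\ldots,u_q$ and $v_2,\ldots,v_p$ inside $S'=K[x_1,\ldots,x_m]$, generating $\bar J_1,\bar I_1\subset S'$. Take a Stanley decomposition $J/I=\bigoplus_{j=1}^r w_jK[Z_j]$ realizing $\sdepth_S(J/I)$ and intersect it with $S'/\bar I_1$, checking first that $\bar J_1/\bar I_1=(J/I)\cap(S'/\bar I_1)$. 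For each surviving space (those with $w_j\in S'$) one has $\{x_{m+1},\ldots,x_n\}\nsubseteq Z_j$, since otherwise $w_jv_1\in w_jK[Z_j]\cap I$; hence deleting these variables from $Z_j$ costs at most $n-m-1$, giving $\sdepth_{S'}(\bar J_1/\bar I_1)\geq \sdepth_S(J/I)-(n-m-1)$. The induction hypothesis $\sdepth_{S'}(\bar J_1/\bar I_1)=m-p+1-\left\lfloor \frac{q-p}{2} \right\rfloor$ then yields the upper bound in one stroke --- in particular no pairing argument in the spirit of Shen on $u_{p+1},\ldots,u_q$ is needed, as those generators are absorbed into the inductive call. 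So: right architecture, right diagnosis of where the difficulty sits, but the step that constitutes the theorem's actual content is absent, and the concrete reduction you propose for it (variable stripping down to Theorem $1.5$) is not viable with the lemmas at hand.
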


\begin{proof}
We use induction on $p$. If $p=0$, by \cite{shen} and \cite[Lemma 3.6]{hvz}, we have $\sdepth_S(J/I) = n- \left\lfloor q/2 \right\rfloor$ as required. If $q=p$, by Lemma $2.4$, we get $\sdepth(J/I) = n-p$ and we are done.
Now, assume $1\leq p < q$. We denote $J_1=(u_2,\ldots,u_q)$ and $I_1=(v_2,\ldots,v_p)$. By induction hypothesis,
we have $\sdepth_S(J_1/I_1) = n - p + 1 - \left\lfloor  \frac{(q-1)-(p-1)}{2} \right\rfloor = n - p - \left\lfloor  \frac{q-p}{2} \right\rfloor + 1$. If $u_1=v_1$, by Lemma $2.7$, it follows that $\sdepth_S(J/I)=n-p -\left\lfloor  \frac{q-p}{2} \right\rfloor$, so we may assume $u_1\neq v_1$.
Either way, by Theorem $2.4$, we have $\sdepth(J/I)\geq n-p -\left\lfloor  \frac{q-p}{2} \right\rfloor$. Note that this inequality can be deduced also from Lemma $2.7$ and Lemma $2.8$, using the decomposition $J/I = (J_1,u_1)/(I_1,u_1)\oplus (I_1,u_1)/(I_1,v_1)$. 

In order to prove the other inequality, we consider a Stanley decomposition $J/I = \bigoplus_{j=1}^r w_j K[Z_j]$ with its Stanley depth equal to $\sdepth(J/I)$. Since, by hypothesis, $u_2,\ldots,u_q$ is a regular sequence on $S/(v_1)$, by reordering of variables, we may assume that $\supp(v_1)=\{x_{m+1},\ldots,x_n\}$ and $u_2,\ldots,u_q \in S':=K[x_1,\ldots,x_m]$, where $1\leq m<n$ is an integer. Thus $J_1$ and $I_1$ are the extension in $S$ for some monomial ideals in $\bar{J}_1,\bar{I}_1\subset S'$ generated by the same monomials as $J_1$ and $I_1$.
Note that $\bar{J}_1/\bar{I}_1 = (J/I)\cap (S'/\bar{I}_1) $ , where we regard $S'/\bar{I}_1$ as a submodule of $S/I$.
Using the same argument as in the proof of Lemma $2.7$, we get
$\sdepth_{S'}(\bar{J}_1/\bar{I}_1) \geq \sdepth_{S}(J/I) - n + m + 1$.
On the other hand, by induction hypothesis, we have
$\sdepth_{S'}(\bar{J}_1/\bar{I}_1) = m - p + 1 - \left\lfloor  \frac{q-p}{2} \right\rfloor$,
and thus we are done.
\end{proof}

\begin{obs}
Note that the hypothesis $u_{p+1},\ldots,u_q$ is a regular sequence on $S/I$ from the Theorem $2.9$ is essential in order to have the equality. Take for instance $J=(x_1,x_2,x_3)\subset S$ and $I=(x_1x_2x_3)$. Then $J/I = x_1K[x_1,x_2] \oplus x_2K[x_2,x_3] \oplus x_3K[x_1,x_3]$ is a Stanley decomposition for $J/I$ and therefore $\sdepth_S(J/I) = 2 > 3-1 - \left\lfloor \frac{3-1}{2} \right\rfloor = 1$.
\end{obs}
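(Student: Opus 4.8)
The plan is to verify the two assertions packed into this remark: that the displayed direct sum really is a Stanley decomposition of $J/I$, and that the resulting value $2$ is exactly $\sdepth_S(J/I)$ rather than merely a lower bound. First I would make the combinatorial description explicit. Here $J=(x_1,x_2,x_3)$ is the maximal ideal, so $J$ consists of all monomials of positive degree, while $I=(x_1x_2x_3)$ consists of the monomials $x_1^ax_2^bx_3^c$ with $a,b,c\geq 1$. Hence a monomial lies in $J\setminus I$ precisely when it has positive degree and at least one of its three exponents vanishes.

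Next I would check that the three Stanley spaces partition this monomial set. Writing a monomial as $x_1^ax_2^bx_3^c$, the space $x_1K[x_1,x_2]$ collects exactly those with $a\geq 1$ and $c=0$, the space $x_2K[x_2,x_3]$ those with $a=0$ and $b\geq 1$, and $x_3K[x_1,x_3]$ those with $b=0$ and $c\geq 1$. I would run through the six exponent sign patterns having at least one zero and confirm that each falls into exactly one class; disjointness follows because the conditions imposed on the vanishing of $a$ and of $c$ are mutually exclusive across the three spaces. This shows the sum is direct and exhausts $J/I$, and since each summand is free over a polynomial ring in two variables we obtain $\sdepth_S(J/I)\geq 2$.

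For the reverse inequality I would simply invoke Theorem $2.4$ with $n=3$, $q=3$, $p=1$, which gives the upper bound $\sdepth_S(J/I)\leq n-p=2$ (equivalently $\dim(J/I)\leq\dim(S/I)=2$). Combining the two bounds yields $\sdepth_S(J/I)=2$, whereas the formula of Theorem $2.9$ would predict $n-p-\lfloor (q-p)/2\rfloor=1$. To confirm this is a genuine counterexample and not a contradiction of Theorem $2.9$, I would check that its regularity hypothesis fails: with $v_1=x_1x_2x_3$ and $u_2=x_2$, $u_3=x_3$, the element $x_2$ is a zerodivisor on $S/I$ since $x_2\cdot(x_1x_3)\in I$ while $x_1x_3\notin I$, so $u_2,u_3$ is not a regular sequence on $S/I$.

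The only real work lies in the bookkeeping of the second paragraph, and I expect no conceptual obstacle there: once the monomial sets are described by their exponent patterns the partition is a finite case check, and the matching upper bound is immediate from the already-proved Theorem $2.4$.
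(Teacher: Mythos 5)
Your proposal is correct and follows the same route as the paper: the remark's content is exactly the displayed decomposition $x_1K[x_1,x_2]\oplus x_2K[x_2,x_3]\oplus x_3K[x_1,x_3]$, and you merely make explicit the partition check, the upper bound $\sdepth_S(J/I)\leq n-p=2$ from Theorem 2.4, and the failure of the regularity hypothesis ($x_2\cdot x_1x_3\in I$ with $x_1x_3\notin I$), all of which the paper leaves implicit. No gaps.
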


We end our paper with the following result, which generalize \cite[Proposition 2.7]{mirci} and \cite[Proposition 1.3]{pop}.

\begin{prop}
Let $I\subset J\subset S$ be two monomial ideals and let $u\in S$ be a monomial. Then, either $(I:u)=(J:u)$, either $\sdepth_S((J:u)/(I:u))\geq \sdepth_S(J/I)$.
\end{prop}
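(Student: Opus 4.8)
The plan is to exploit the fact that multiplication by the monomial $u$ gives an injective $\mathbb Z^n$-graded $K$-linear map $(J:u)/(I:u)\to J/I$, and to pull back a Stanley decomposition of $J/I$ along it. Concretely, a monomial $m$ lies in $(J:u)\setminus(I:u)$ if and only if $um\in J\setminus I$, so $m\mapsto um$ identifies the monomial $K$-basis of $(J:u)/(I:u)$ with the set of monomials of $J/I$ that are divisible by $u$. If this set is empty we are in the first alternative $(I:u)=(J:u)$; otherwise I will build, from a chosen Stanley decomposition of $J/I$, a Stanley decomposition of $(J:u)/(I:u)$ of no smaller Stanley depth.

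First I would fix a Stanley decomposition $J/I=\bigoplus_{i=1}^r m_iK[Z_i]$ with $\sdepth=\sdepth_S(J/I)$, and for each $i$ collect $A_i=\{m : um\in m_iK[Z_i]\}$; since the monomials of $J/I$ are partitioned by the Stanley spaces and $m\mapsto um$ is injective, the sets $A_i$ partition the monomials of $(J:u)/(I:u)$. The heart of the argument is to show that each nonempty $A_i$ is again a Stanley space over the same $K[Z_i]$. Writing $u=\prod_j x_j^{u_j}$ and $m_i=\prod_j x_j^{a_j}$, a monomial $n\in m_iK[Z_i]$ has exponent of $x_j$ equal to $a_j$ for $x_j\notin Z_i$ and at least $a_j$ for $x_j\in Z_i$; the condition $u\mid n$ then forces $u_j\le a_j$ for every $x_j\notin Z_i$ (this is exactly the condition under which $A_i\neq\emptyset$), and on the variables of $Z_i$ it forces the exponent of $x_j$ to be at least $\max(a_j,u_j)$.

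Granting this, I would set $m_i'=m_i\cdot\prod_{x_j\in Z_i}x_j^{\max(0,u_j-a_j)}/u$, check that this is a genuine monomial of $(J:u)/(I:u)$ — one verifies $u\mid m_i\prod_{x_j\in Z_i}x_j^{\max(0,u_j-a_j)}$ precisely because $u_j\le a_j$ off $Z_i$, and that $u\cdot m_i'\in m_iK[Z_i]\subset J\setminus I$ — and then conclude $A_i=m_i'K[Z_i]$ as a free $K[Z_i]$-module. Summing over the indices $i$ with $A_i\neq\emptyset$ yields a Stanley decomposition $(J:u)/(I:u)=\bigoplus_{A_i\neq\emptyset}m_i'K[Z_i]$ all of whose Stanley spaces have dimension $|Z_i|\ge\sdepth_S(J/I)$, whence $\sdepth_S((J:u)/(I:u))\ge\sdepth_S(J/I)$.

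I expect the main obstacle to be the exponent bookkeeping in the middle step: showing cleanly that the divisible-by-$u$ part of each Stanley space is again a single free $K[Z_i]$-module with the same support $Z_i$ (rather than a more complicated shifted region), and correctly isolating the divisibility condition on the variables outside $Z_i$ that governs when $A_i$ is empty. Everything else — the injectivity of multiplication by $u$, the disjointness of the $A_i$, and the final minimum over the $|Z_i|$ — is routine once this local structure is pinned down.
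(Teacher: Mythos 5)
Your proposal is correct and is essentially the paper's own argument: the paper also pulls back a fixed Stanley decomposition $J/I=\bigoplus_i u_iK[Z_i]$ along multiplication by $u$ (phrased as $(J:x_1)/(I:x_1)\cong (J/I)\cap(x_1)$), and shows each Stanley space meets $uS$ in either $\{0\}$ or $\operatorname{lcm}(u_i,u)K[Z_i]$ with the same $Z_i$ --- which is exactly your $A_i=m_i'K[Z_i]$ computation. The only difference is cosmetic: the paper first reduces to $u=x_1$ and handles one variable at a time, while you carry out the lcm bookkeeping for an arbitrary monomial $u$ in a single step.
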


\begin{proof}
It is enough to consider the case $u=x_1$ and to assume that $(I:x_1)\subsetneq (J:x_1)$. Firstly, note that $x_1(I:x_1)=I\cap (x_1)$ and $x_1(J:x_1)=J\cap (x_1)$. Therefore, we have  
$ (J:x_1)/(I:x_1) \cong (J\cap(x_1))/(I\cap(x_1)) \cong (J/I)\cap (x_1)$, 
as multigraded $K$-vector spaces. 

Let $J/I = \bigoplus_{i=1}^r u_i K[Z_i]$ be a Stanley decomposition for $J/I$. It follows that
$(J/I)\cap (x_1) = \bigoplus_{i=1}^r (u_i K[Z_i] \cap x_1 S)$. One can easily see that, if $x_1\notin \supp(u_i)\cup Z_i$, then $u_i K[Z_i] \cap x_1 S = \{0\}$. Otherwise, we claim that $u_i K[Z_i] \cap x_1 S = LCM(u_i,x_1)K[Z_i]$. Indeed, if $x_1|u_i$, then $u_iK[Z_i]\subset x_1S$ and the previous equality holds. If $x_1\nmid u_i$, then $x_1\in Z_i$ and $LCM(u_i,x_1)=x_1u_i$. Obviously, we get $x_1u_iK[Z_i] \subset u_i K[Z_i] \cap x_1 S$. For the other inclusion, chose $v\in u_i K[Z_i] \cap x_1 S$ a monomial. It follows that $v\in u_iK[Z_i]$ and $x_1|v$ and thus $x_1u_i|v$, since $x_1\nmid u_i$. Therefore, $v\in x_1u_iK[Z_i]$ and we are done.

By our assumption that $(I:x_1)\subsetneq (J:x_1)$, there exists some $i$ such that $u_i K[Z_i] \cap x_1 S\neq \{0\}$.
Thus, we obtain a Stanley decomposition for $(J/I)\cap(x_1)$ with its Stanley depth $\geq$ than the Stanley depth of the given decomposition for $J/I$.
\end{proof}

\vspace{2mm} \noindent {\footnotesize
\begin{minipage}[b]{15cm}
 Mircea Cimpoea\c s, Simion Stoilow Institute of Mathematics, Research unit 5, P.O.Box 1-764, Bucharest 014700, Romania\\
 E-mail: mircea.cimpoeas@imar.ro
\end{minipage}}

\end{document}